\newtheorem{theorem}{Theorem}[section]
\newtheorem{corollary}[theorem]{Corollary}
\newtheorem{proposition}[theorem]{Proposition}
\newtheorem{lemma}[theorem]{Lemma}
\theoremstyle{remark}
\numberwithin{equation}{section}
\title{Genetic contribution of advantaged ancestors in the biparental Moran model - finite selection}
\author[1]{Camille Coron}
\author[2]{Yves Le Jan}
\affil[1]{Universit\'e Paris-Saclay, AgroParisTech, INRAE, UMR MIA Paris-Saclay, 91120, Palaiseau, France}
\affil[2]{Universit\'e Paris-Saclay, CNRS, Laboratoire de math\'ematiques d'Orsay, 91405, Orsay, France.}
\date{}
\begin{document}
\maketitle

\begin{abstract}
We study a population of $N$ individuals evolving according to a biparental Moran model with two types, one being advantaged compared to the other. The advantage is conferred by a Mendelian mutation, which reduces the death probability of individuals carrying it. We assume that a proportion $a$ of individuals initially carry this mutation, which therefore eventually gets fixed with high probability. After a long time, we sample a gene uniformly from the population, at a new locus, independent of the locus under selection, and calculate the probability that this gene originated from one of the initially advantaged individuals, when the population size is large. Our theorem provides quantitative insights, such as the observation that under strong viability selection, if only $1\%$ of the individuals are initially advantaged, up to $19\%$ of the population's genome will originate from them after a long time.\\

\end{abstract}

\textbf{Keywords:} Biparental Moran model with viability selection ; dynamical system approximation ; Ancestor's genetic contribution ; Ancestral type distribution.

\section{Motivation, model and main results}

\subsection{Motivation}

This article investigates how carrying a beneficial genetic mutation influences the quantity of transmitted genome, in a population of sexually reproducing hermaphroditic individuals. Building on previous works (\cite{geneal} and \cite{CoronLeJan22}), we consider a population, represented by a biparental Moran model, of haploid individuals whose genome is a random and balanced mixture of their two parents' genomes. Initially, a proportion $a$ of individuals carry a mutation that increases their life expectancy and thus their genetic contribution to the population. Our goal is to quantify the effect of selection strength on the genetic composition of the population in the limit of large population size and more specifically to determine the impact of the advantage conferred by the mutation on the genetic contribution of ancestors. Our approach could be used for instance to estimate the strength of selection in a population composed of two types of individuals (for instance coming from two different source populations), using admixture data, as is the object of the works \cite{VerduRosenberg2011,FortesLimaetal2021}.

Our paper focuses on two major phenomena in population genetics: on the one hand, the impact of selection on genome structure, which has been the subject of numerous studies, and on the other hand, the biparental transmission of the genome, which has been relatively little studied so far. Biparental genealogies and genetics have yet received some interest, notably in \cite{Chang1999,Derrida2000,GravelSteel2015}, in which time to more recent common ancestors and ancestors' weights are investigated for the Wright-Fisher biparental model. In \cite{geneal}, we studied the asymptotic law of the  contribution of an ancestor, to the genome of the present time population, within a neutral genetic framework. In \cite{Lambertetal2018}, the authors studied the asymptotic structure of the genome of individuals in a population experiencing biparental haploid reproduction, while in \cite{PfaffelhuberWakolbinger2023} the authors studied the biparental transmission of transposable elements in diploid populations. The monoparental Moran model with selection at birth has received some interest, notably in \cite{EtheridgeGriffiths} that studies its dual coalescent and \cite{KluthBaake} that studies alleles fixation probabilities and ancestral lines. The impact of selection on genetic transmission has also been studied using the concept of ancestral selection graph, introduced in  \cite{KroneNeuhauser1997}, that notably led to the articles \cite{Fernhead2002, Taylor2007, Kluthetal2013, Lenzetal2015, BaakeLenzWakolbinger2016, Cordero2017} dealing with ancestors type distribution in populations modeled by monoparental bi-type Wright-Fisher or Moran models with mutation and selection, and using the lookdown construction. Finally, a few articles combine selection and biparental pedigree. The articles \cite{MatsenEvans2008} and \cite{BartonEtheridge2011} study the link between pedigree, individual reproductive success and genetic contribution while in \cite{CoronLeJan22} we studied the expectation of the asymptotic contribution to the genome of a biparental population, of a mutant subject to infinite selection.

Studying the dynamics of genome transmission during the fixation period of an advantageous allele is linked to studying genetic hitchhiking, which is done notably in \cite{Durrett2004,Etheridgeetal2006} for haploid individuals and in \cite{Stephanetal2006} for diploid individuals. Genetic hitchhiking is a phenomenon through which the dynamics of the frequency of an allele is influenced by the dynamics of genes under selection, that are close to this first allele, or at least on the same DNA strand. This phenomenon intrinsically relies on the spatial structure of the genome, and on the fact that two pieces of genomes that are close to each other on the genome have a large probability to be transmitted together during reproduction. In this article, we study the transmission of the genome of advantaged individuals, while assuming that the genome is composed of infinitely many independent loci (which is obviously a strong simplification). In some sense, we study the weakest form of genetic hitchhiking, in which pieces of genome are linked only through the fact that they are present in the same individual and are transmitted through the same pedigree, that is itself shaped by the transmission of advantageous alleles. The form and role of the pedigree on unlinked neutral loci during a selective sweep is also investigated using simulations in \cite{WakeleyKingWilton2016}, for populations of diploid individuals.

Note. A manuscript bearing a similar title has been deposited on arXiv and is not intended for publication. It contains the proof of a weaker convergence statement, based on properties of the discrete model, some of which have not been included in the present work.

\subsection{Model}\label{sec:model}

Following the framework of previous papers (\cite{geneal} and \cite{CoronLeJan22}), we model the dynamics of a population of $N$ haploid individuals using a biparental two-type discrete time Moran model with viability selection and no mutation. Time will be represented by the set $\mathbb{Z}_+=\{0,1,...\}$ of non-negative integers.

Specifically, we consider a population of fixed size 
$N$ in which a mutation at a given locus confers an advantage to individuals carrying it. We assume that selection affects only the death rate of individuals: advantaged individuals have a death weight of $1$, while non-advantaged individuals have a death weight of $1+s$, with $s>0$. At each discrete time step, two individuals are chosen uniformly at random with replacement to be parents and produce one offspring, which replaces a third individual chosen with probability proportional to its death weight (and independently from the two parents, which means that a child can replace its own parent, and also that both parents can be the same individual). This results in a death probability at each time step that is $1+s$ times higher for a non-advantaged individual than for an advantaged individuals, leading to an increased life expectancy and mean offspring number for the latter. The selection parameter $s$ will not depend on $N$, as is often the case in population genetics literature, and in particular we will not explore diffusive scalings introduced in \cite{Kimura1964}.

Genetic transmission follows Mendelian rules, in the sense that at a given locus of the genome, each individual inherits an allele, chosen uniformly at random among the two alleles of its parents. In particular, the transmission of advantage to offspring is characterized by Mendelian transmission at one locus. We refer to the parent that transmits a copy of its gene at the locus under selection as the "mother" and the other parent as the "father". An individual is therefore advantaged if and only if its mother is advantaged. The limiting case where $s$ is infinite (i.e., only non-advantaged individuals can die) has been studied in a previous paper \cite{CoronLeJan22}, and comparisons to this work will be provided later on. As in the two previous works \cite{geneal} and \cite{CoronLeJan22}, recombination is not considered. We indeed focus on the probability for a gene sampled at present time, to originate from a given ancestor, which can be seen as the proportion of genome transmitted by this ancestor, if the genome is seen as a set of infinitely many independent loci. 

Let us denote by $I=\{1,2,...,N\}$ the sites in which individuals live (which is simply a way to number individuals at each time step), and denote by $(\mu_n,\pi_n,\kappa_n)\in I^3$ the respective positions of the mother, father, and dying individual at time step $n\in\mathbb{Z}_+$. As in \cite{geneal}, the reproduction dynamics described above defines an oriented random graph on $I\times\mathbb{Z}_+$ (as represented in Figure \ref{FigPedigree}), denoted by $G$, representing the pedigree of the population, such that between time $n$ and time $n+1$, two arrows are drawn from $(\kappa_n,n+1)$ to $(\pi_n,n)$ and $(\mu_n,n)$ respectively and $N-1$ arrows are drawn from $(i,n+1)$ to $(i,n)$ for each $i\in I\setminus\{\kappa_n\}$. Note that individuals are now also characterized by their advantage: advantaged individuals are represented in red in Figure \ref{FigPedigree}. We denote by $\mathcal{Y}_n\subset \{1,...,N\}$ the set of advantaged individuals at time $n$. We also define by $(\mathcal{F}_n,n\in\mathbb{Z}_+)$ the filtration associated to the whole population dynamics, i.e. $\mathcal{F}_n=\sigma(\mathcal{Y}_0,\{\mu_l,\pi_l,\kappa_l, l<n\})$ for any $n\in\mathbb{Z}_+$. The filtration $(\mathcal{F}_n,n\in\mathbb{Z}_+)$ notably includes the filtration associated to the stochastic process $(\mathcal{Y}_n)_{n\in\mathbb{Z}_+}$. Note that the pedigree is characterized only by the sequence $((\{\mu_n,\pi_n\},\kappa_n))_{n\in\mathbb{Z}_+}$, which is not a Markov chain, due to selection. We denote by $\{\mathcal{G}_n,n\in\mathbb{Z}_+\}$ with $\mathcal{G}_n=\sigma(\{\{\mu_l,\pi_l\},\kappa_l,l<n\})$ its associated filtration.

\begin{figure}[ht]
\begin{center}\includegraphics[scale=0.5]{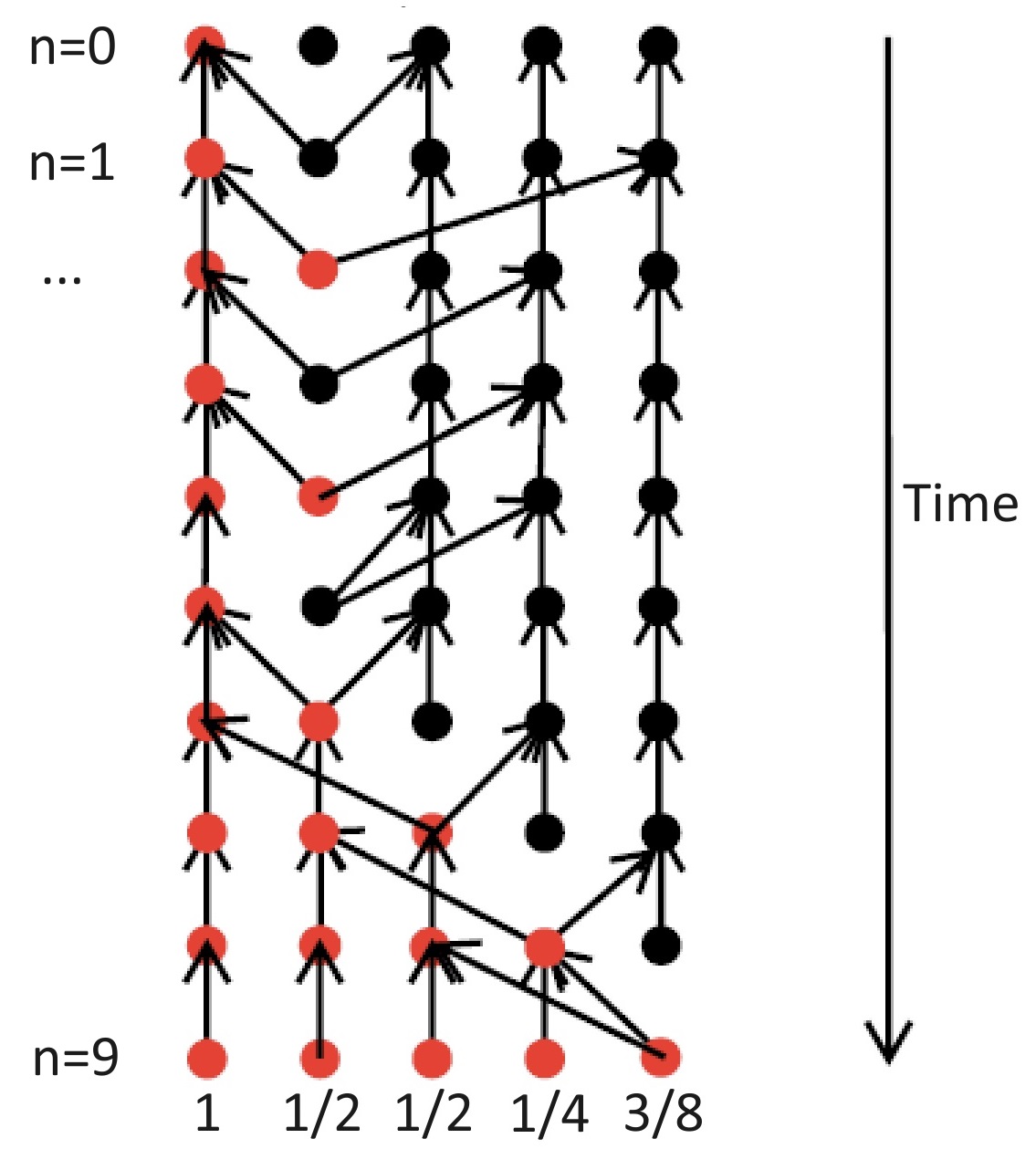}
\end{center} 
\caption{This figure represents simultaneously the pedigree $G$ and the set of advantaged individuals, at $10$ time steps, in a population of $5$ individuals. The time orientation is from past to future and arrows materialize gene flow between individuals, represented backwards in time. Advantaged individuals are represented in red. Numbers at the bottom gives the probability that a gene sampled in each of the individuals come from the initially advantaged individual. In this example the genetic weight of the initially advantaged individual (i.e. $N$ times the probability, given the pedigree, that a gene sampled uniformly at time $n=9$ comes from this individual at time $n=0$) is equal to $21/8=1+1/2+1/2+1/4+3/8$.\label{FigPedigree}}
\end{figure}

\subsection{Ancestors genetic weights}

To investigate the impact of selection on the genetic composition of the population, we consider a second locus that is far enough from the locus under mutation so that the genome is assumed to be transmitted independently at these two loci. We then sample an individual in the population at time $n$ and consider the genealogy of its gene at this second locus, i.e. the sequence of ancestral individuals through which this gene was transmitted, at all times $0\leq n-k\leq n$. This genealogy, denoted by $(X^{(n)}_k, n-k)_{0\leq k\leq n}$, is a symmetric random walk on the pedigree $G$. A particularly interesting element is the number $X_n^{(n)}$ that gives the position of the initial ancestor of the sampled gene.

The key object in this model, as introduced in \cite{geneal} is therefore the sequence of random variables $((W_n(i,j))_{1\leq i,j\leq N})_{n\geq 0}$ defined by

\begin{equation} \label{eq:defA} W_n(i,j)=\mathbb{P}(X^{(n)}_n=j|X^{(n)}_0=i,\mathcal{G}_n).\end{equation}

In words, the quantity $W_n(i,j)$ is the probability, given the pedigree before time $n$, $\mathcal{G}_n$, that any gene of individual $i$ living at time $n$ comes from the ancestor $j$, living at time $0$. It is a deterministic function of the random graph $G$ between time $0$ and time $n$ and does not depend on the advantage status of individuals in the pedigree (although the pedigree itself depends on this status). More formally, $W_n(i,j)=\mathbb{P}(X^{(n)}_n=j|X^{(n)}_0=i,\mathcal{F}_n)$.

If the genome is very large and the evolution of distant genes are sufficiently decorrelated, we can expect this quantity to be close to the proportion of genes of individual $i$ that come from individual $j$. For this reason we refer to this quantity as the \textit{genetic weight at time $n$ of ancestor $j$ in the genome of individual $i$} (see Figure \ref{FigPedigree} for an illustration). We define the genetic weight of ancestor $j$ at time $n$ as the sum $\sum_{i=1}^NW_n(i,j)$.

\subsection{Main result}\label{sec:IntroResult}

Recall that $\mathcal{Y}_n\subset I$ denotes the set of advantaged individuals at time $n$, and let $Y_n$ be its cardinality, i.e. the number of advantaged individuals at time $n$. Our goal is to investigate the impact of selection on the weight of ancestors, and notably to study the probability that a gene sampled in the population at present time originated from an advantaged individual. 

To this aim we introduce the two following quantities: 
\begin{align*}
U_n&=\sum_{l\in\mathcal{Y}_n}\sum_{l'\in\mathcal{Y}_0} W_n(l,l'), \quad\text{and}\quad
V_n=\sum_{l\notin\mathcal{Y}_n}\sum_{l'\in\mathcal{Y}_0} W_n(l,l').
\end{align*}

The quantity $U_n$ (resp. $V_n$) will be called the total  genetic weight of initially advantaged ancestors among advantaged (resp. disadvantaged) individuals, at time $n$ ; it gives the quantity of genome coming from the initially advantaged ancestors and carried by advantaged (resp. disadvantaged) individuals, assuming that each individual carries a quantity of genome equal to $1$.
The quantity $U_n\in[0,N]$ (resp. $V_n\in[0,N]$) is equal to $Y_n$ (resp. $N-Y_n$) times the probability, knowing the pedigree, that a gene sampled uniformly among advantaged (resp. non-advantaged) individuals at time $n$ originates from an initially advantaged individual. More formally, as long as $Y_n\notin\{0,N\}$, if we denote by $\mathcal{U}(A)$ the uniform law on a discrete set $A$,

$$\frac{U_n}{Y_n}=\mathbb{P}(X_n^{(n)}\in \mathcal{Y}_0| X^{(n)}_0\sim\mathcal{U}(\mathcal{Y}_n),\mathcal{G}_n),$$ and $$\frac{V_n}{N-Y_n}=\mathbb{P}(X_n^{(n)}\in\mathcal{Y}_0| X^{(n)}_0\sim\mathcal{U}(I\setminus\mathcal{Y}_n),\mathcal{G}_n).$$

We denote by $T_k$ the first hitting time of any integer $k$ lying between $0$ and $N$, by the Markov chain $(Y_n)_{n\in\mathbb{N}}$. Then the two quantities $$\frac{U_{T_N}}{N}\mathbf{1}_{\{T_N<\infty\}}=\frac{U_{T_N}}{Y_{T_N}}\mathbf{1}_{\{T_N<\infty\}}\quad\text{and}\quad\frac{V_{T_0}}{N}\mathbf{1}_{\{T_0<\infty\}}=\frac{V_{T_0}}{N-Y_{T_0}}\mathbf{1}_{\{T_0<\infty\}}$$
represent the proportion of genome of the population coming from initially advantaged individuals once the mutation is fixed or has disappeared, respectively. They indeed give the probability that a gene sampled uniformly from the population, once the latter has become monomorphic, originates from an advantaged individual (see Figure \ref{FigPedigree} for an example starting with one advantaged individual, and fixation of this type). 
 
After the time $\inf(T_0,T_N)$, the population continues to evolve, according to the neutral model studied in \cite{geneal}. Note finally that although $W_n$ is a $\mathcal{G}_n$-measurable function, the genetic weights $U_n$ and $V_n$ are not, since the pedigree alone does not give the advantage status of individuals. The precise dynamics of all these stochastic processes will be given in the next section, but our brief introduction so far suffices to state our main result:

\begin{theorem}\label{thmCvceDeterministe} Let $Z_n=\left(\frac{Y_n}{N},\frac{U_n}{N},\frac{V_n}{N}\right)_{n\in\mathbb{N}}$. Let $a\in(0,1)$. If $Y_0=\lfloor aN\rfloor$, then for any $c\in\mathbb{R}_+$,
\begin{align}\sup_{0\leq t\leq c}\|Z_{\lfloor Nt\rfloor}-z_t\|\underset{N\rightarrow\infty}{\longrightarrow}0\end{align} 
in probability, where $(z_t)_{t\geq0}=(y_t,u_t,v_t)_{t\geq0}$ satisfies
$$\left\{\begin{aligned}
      y_t&= F^{-1}\left(\frac{a^{1+s}}{1-a}\exp(st)\right) \qquad\text{where  $F:x\mapsto\frac{x^{1+s}}{1-x}$ maps $[0,1)$ onto $[0,\infty)$}
      \\
      u_t&= y_t\frac{a^{\frac{1+s}{2s}}}{(1-a)^{\frac{1}{2s}}}\left[\frac{(1-y_t)^{\frac{1}{2s}}}{y_t^{\frac{1+s}{2s}}}+\int_a^{y_t}\frac{(1-x)^{\frac{1}{2s}}}{x^{\frac{1+s}{2s}}}\left[\frac{1}{2}+\frac{1}{2s}\frac{1}{1-x}\right]dx\right] \\
      v_t&= (1-y_t)\frac{a^{\frac{1+s}{2s}}}{(1-a)^{\frac{1}{2s}}}\int_a^{y_t}\frac{(1-x)^{\frac{1}{2s}}}{x^{\frac{1+s}{2s}}}\left[\frac{1}{2}+\frac{1}{2s}\frac{1}{1-x}\right]dx. \\
    \end{aligned}\right.$$
\end{theorem}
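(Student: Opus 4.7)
The plan is to apply the standard mean-field convergence of a semimartingale to an ODE. Although the triple $(Y_n, U_n, V_n)$ is not Markov---the full distribution of the individual weights $w_n(i) := \sum_{l' \in \mathcal{Y}_0} W_n(i, l')$ is not recoverable from it---the one-step conditional expectations $\mathbb{E}[Z_{n+1} - Z_n \mid \mathcal{F}_n]$ depend on the past only through $(Y_n, U_n, V_n)$, via the empirical means $U_n/Y_n$, $V_n/(N-Y_n)$ and $(U_n+V_n)/N$. This is enough to run a martingale-plus-drift decomposition and pass to the ODE limit by Gronwall.

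The first step is to compute these drifts by a case analysis on the pair $(\mathbf{1}_{\mu_n \in \mathcal{Y}_n}, \mathbf{1}_{\kappa_n \in \mathcal{Y}_n})$, using the update rule $w_{n+1}(\kappa_n) = \tfrac{1}{2}(w_n(\mu_n) + w_n(\pi_n))$, $w_{n+1}(i) = w_n(i)$ for $i \neq \kappa_n$, and the conditional independence of the parents from $\kappa_n$ given $\mathcal{F}_n$. Writing $y = Y_n/N$, $u = U_n/N$, $v = V_n/N$ and $\delta = 1 + s(1-y)$, this yields $\mathbb{E}[Z_{n+1} - Z_n \mid \mathcal{F}_n] = N^{-1} b(Z_n)$ with
\begin{equation*}
b(y,u,v) = \left(\frac{sy(1-y)}{\delta},\ \frac{u(s(1-y)-1)}{2\delta} + \frac{y(u+v)}{2},\ -\frac{v(1+s(1+y))}{2\delta} + \frac{(1-y)(u+v)}{2}\right).
\end{equation*}
The field $b$ is smooth, hence Lipschitz, on the compact $K = \{(y,u,v) \in [0,1]^3 : u+v \le 1\}$ in which $Z_n$ lives (there is no singularity, since $\delta \ge 1$). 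The main computational obstacle in this step is the algebraic regrouping of the four cases: the coefficient of $U_n/Y_n$ in $\mathbb{E}[\Delta U_n \mid \mathcal{F}_n]$ only becomes as clean as above once one notices that it repackages as a multiple of $\delta$.

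For the convergence itself, $w_n(i) \in [0,1]$ and only one weight is updated per step, so $|Y_{n+1}-Y_n| \le 1$ and $|U_{n+1}-U_n|, |V_{n+1}-V_n| \le 1$. Write the discrete semimartingale decomposition $Z_{\lfloor Nt \rfloor} = Z_0 + N^{-1}\sum_{k < \lfloor Nt \rfloor} b(Z_k) + N^{-1} M_{\lfloor Nt \rfloor}$ with $M_n$ a vector martingale of bounded increments. Doob's $L^2$ inequality gives $\sup_{k \le \lfloor Nc \rfloor}\|M_k\|/N = O_P(N^{-1/2})$. Since $z_t$ also lives in $K$, a discrete Gronwall inequality applied to the comparison $Z_{\lfloor Nt\rfloor} - z_t$ then yields $\sup_{t \le c}\|Z_{\lfloor Nt\rfloor} - z_t\| \to 0$ in probability.

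It remains to verify the explicit formulas. The equation $\dot y = sy(1-y)/\delta$ integrates after a direct check to $F(y_t) = F(a) e^{st}$, since $(d/dt)\log F(y_t) = \bigl[(1+s)/y + 1/(1-y)\bigr]\dot y = s$. For $(u,v)$, the decisive trick---and the key insight behind the explicit formulas---is to change variables to $A := u/y$ and $D := v/(1-y)$ (the mean weights among the descendants of the initially advantaged, split by current advantage), under which the drift collapses telescopically to
\begin{equation*}
\dot A = \tfrac{1-y}{2}(D - A), \qquad \dot D = \tfrac{y}{2}(A - D).
\end{equation*}
Consequently $R := A - D$ satisfies $\dot R = -R/2$, and $R(0)=1$ gives $R(t) = e^{-t/2}$; substituting $t = s^{-1}\log(F(y)/F(a))$ yields
\begin{equation*}
R(y) = \frac{a^{(1+s)/(2s)}(1-y)^{1/(2s)}}{(1-a)^{1/(2s)}\, y^{(1+s)/(2s)}}.
\end{equation*}
Finally, switching to $y$ as independent variable, the identity $dD/dy = \dot D/\dot y = \delta R(y)/(2s(1-y)) = \bigl[\tfrac{1}{2} + \tfrac{1}{2s(1-y)}\bigr]R(y)$ together with $D(a) = 0$ produces the announced formula for $v_t/(1-y_t)$, while $A = D + R$ yields the one for $u_t/y_t$.
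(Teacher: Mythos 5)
Your proposal is correct and follows essentially the same route as the paper: the same drift computation (your $b$ is an algebraic regrouping of the paper's $g$, using $\delta=y+(1+s)(1-y)$), the same martingale-plus-Gronwall convergence argument, and the same key identity $u/y-v/(1-y)=e^{-t/2}$ followed by a change of time variable to $y$. The only (cosmetic) difference is that you integrate $\dot D=\tfrac{y}{2}R$ as a direct quadrature once $R$ is known explicitly, whereas the paper solves the equivalent linear equation for $\beta=De^{t/2}$ by variation of parameters.
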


This result gives the convergence of the trajectory of the stochastic process $(Z_{\lfloor Nt\rfloor})_{0\leq t\leq c}$ for any $c>0$, towards an explicit solution of a dynamical system. In particular, it gives the genetic contribution of the initially advantaged individuals in compact time intervals, under a large population size assumption. 
More precisely, it gives the limiting dynamics of the proportion of advantaged individuals ($Y_{\lfloor Nt\rfloor}/N$), and of the proportion of genome coming from advantaged individuals carried respectively by advantaged ($U_{\lfloor Nt\rfloor}/N$) and disadvantaged ($V_{\lfloor Nt\rfloor}/N$) individuals. In Section \ref{sec:proofs} we will see that the limiting dynamics of the proportion of advantaged individuals results from an autonomous differential equation $y'=\frac{sy(1-y)}{y+(1+s)(1-y)}$. This differential equation driven by the selection parameter $s$ can be compared to that obtained for the classical (monoparental) Moran model, for which the proportion $z$ of advantaged individuals under a similar scaling satisfies $z'=sz(1-z)$ (as can be seen in \cite{Cordero2017}, Proposition 3.1 for instance). The difference between the two equations lies in the biparental Mendelian transmission versus the clonal transmission. A simple and unsurprising comparison between the two models is that the proportion of advantaged individuals increases slower in the biparental Mendelian model, since $y+(1+s)(1-y)>1$.
Note that if the initial proportion of advantaged individuals $\frac{Y_0}{N}$ is equal to $\lfloor aN\rfloor$ then $\mathbb{P}(T_N<T_0)\rightarrow 1$ when $N$ goes to infinity (Proposition \ref{prop-Y-TN}).
The following corollary gives, under a large population approximation, what will be the genetic weight of the initially advantaged ancestors, initially assumed to be in proportion $a$, when this proportion reaches any level $b>a$. 
\begin{corollary}\label{cor_main} Let $a<b\in(0,1)$. If $Y_0=\lfloor aN\rfloor$ then 
\begin{align*}\frac{U_{T_{\lfloor bN\rfloor}}}{N}\mathbf{1}_{\{T_{\lfloor bN\rfloor\}}<\infty}\underset{N\rightarrow\infty}{\longrightarrow} b\frac{a^{\frac{1+s}{2s}}}{(1-a)^{\frac{1}{2s}}}&\left[\frac{(1-b)^{\frac{1}{2s}}}{b^{\frac{1+s}{2s}}}+\int_a^{b}\frac{(1-x)^{\frac{1}{2s}}}{x^{\frac{1+s}{2s}}}\left[\frac{1}{2}+\frac{1}{2s}\frac{1}{1-x}\right]dx\right] \end{align*} in probability.
\end{corollary}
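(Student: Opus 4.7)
The plan is to derive Corollary \ref{cor_main} from Theorem \ref{thmCvceDeterministe} by identifying the right-hand side as $u_{t_b}$, where $t_b$ is the unique time at which $y_{t_b}=b$, and then showing that $T_{\lfloor bN\rfloor}/N$ concentrates at $t_b$. From the explicit formula $y_t=F^{-1}(a^{1+s}(1-a)^{-1}\exp(st))$, the map $t\mapsto y_t$ is continuous and strictly increasing from $a$ at $t=0$ to $1$ as $t\to\infty$, so for any $b\in(a,1)$ there is a unique $t_b>0$ with $y_{t_b}=b$; substituting $y_t=b$ into the formula for $u_t$ yields exactly the right-hand side of the corollary. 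It therefore suffices to prove $U_{T_{\lfloor bN\rfloor}}/N\to u_{t_b}$ in probability on the high-probability event $\{T_{\lfloor bN\rfloor}<\infty\}$.

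First I would fix some $c>t_b$ and invoke Theorem \ref{thmCvceDeterministe} on $[0,c]$ to get that $\sup_{0\le t\le c}\|Z_{\lfloor Nt\rfloor}-z_t\|$ tends to $0$ in probability. Since Proposition \ref{prop-Y-TN} gives $\mathbb{P}(T_N<\infty)\to 1$, and since the integer-valued chain $Y_n$ changes by at most one per step and starts at $\lfloor aN\rfloor<\lfloor bN\rfloor<N$, it must hit $\lfloor bN\rfloor$ before reaching $N$; hence $\mathbb{P}(T_{\lfloor bN\rfloor}\le T_N<\infty)\to 1$, and the indicator $\mathbf{1}_{T_{\lfloor bN\rfloor}<\infty}$ can be treated as equal to $1$ throughout the argument.

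Next I would show that $\tau_N:=T_{\lfloor bN\rfloor}/N\to t_b$ in probability. Since $y$ is strictly increasing with strictly positive derivative $y'(t_b)=sb(1-b)/(b+(1+s)(1-b))$ at $t_b$, for any $\varepsilon>0$ one can choose $\delta>0$ with $y_{t_b-\varepsilon}<b-\delta$ and $y_{t_b+\varepsilon}>b+\delta$; on the event $\sup_{0\le t\le c}|Y_{\lfloor Nt\rfloor}/N-y_t|<\delta/2$ (of probability tending to $1$), and for $N$ large enough that $|\lfloor bN\rfloor/N-b|<\delta/2$, the process $Y_{\lfloor Nt\rfloor}$ lies strictly below $\lfloor bN\rfloor$ at $t=t_b-\varepsilon$ and strictly above $\lfloor bN\rfloor$ at $t=t_b+\varepsilon$, forcing $\tau_N\in(t_b-\varepsilon,t_b+\varepsilon)$.

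Finally, noting that $T_{\lfloor bN\rfloor}=\lfloor N\tau_N\rfloor$ since $N\tau_N$ is an integer, the triangle inequality
\begin{equation*}
\left|\frac{U_{T_{\lfloor bN\rfloor}}}{N}-u_{t_b}\right|\;\le\;\sup_{0\le t\le c}\left|\frac{U_{\lfloor Nt\rfloor}}{N}-u_t\right|\;+\;|u_{\tau_N}-u_{t_b}|
\end{equation*}
is valid on $\{\tau_N\le c\}$; the first term vanishes by Theorem \ref{thmCvceDeterministe} and the second by continuity of $u$ combined with $\tau_N\to t_b$, which gives the claimed convergence in probability. The only step requiring genuine care, beyond a direct application of the theorem, is the passage from uniform convergence on a deterministic interval to convergence evaluated at the random hitting time $\tau_N$; this is handled by the strict-monotonicity argument above, whose key input is the positive lower bound on $y'(t_b)$.
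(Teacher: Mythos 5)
Your proof is correct and follows essentially the same route as the paper, which simply cites Theorem \ref{thmCvceDeterministe}, Proposition \ref{prop-Y-TN}(ii) and Corollary \ref{cor-zb}(i): you identify the limit as $u_{t_b}$ (the paper's $u_{r_b}$), localize the rescaled hitting time $T_{\lfloor bN\rfloor}/N$ near $t_b$, and combine uniform convergence with continuity of $u$. The only differences are cosmetic: you re-derive the hitting-time control from the theorem itself via strict monotonicity of $y$ rather than quoting Proposition \ref{prop-Y-TN}(ii), and for the lower bound $\tau_N>t_b-\varepsilon$ you should invoke the uniform bound over all of $[0,t_b-\varepsilon]$ together with the monotonicity of $y$ (which your stated event already supplies), not merely the value of $Y$ at the single time $t_b-\varepsilon$.
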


We think that this still holds true for $b=1$, but proving it would necessitate a finer study of the variance of $U_n$ (see \cite{coronLeJan2024} for the proof of the convergence of the expectation of $\frac{U_{T_{N}}}{N}$). As an example, if $s=1$, then Corollary \ref{cor_main} tells us that if $1\%$ of the individuals are initially advantaged, then they will on average end to be responsible for approximately $5\%$ of the whole population's genome (instead of $1\%$ in the neutral case where $s=0$). Note besides that the limit given in Corollary \ref{cor_main} is non-decreasing in $s$, and one can see that it converges to $2\sqrt{a}-a$ as $s\uparrow \infty$ and $b$ converges to $1$. This result can also be retrieved from \cite{CoronLeJan22}, in which we focused on the case where $s=\infty$. In particular, for large selection strength $s$, if only $1\%$ of the individuals are initially advantaged, they will on average be responsible for approximately $19\%$ of the population's genome.
Theorem \ref{thmCvceDeterministe} and Corollary \ref{cor_main} are illustrated in Figure \ref{fig:simulations}. 
\begin{figure}
\centering
\begin{minipage}{0.5\linewidth}
\centering
  \includegraphics[height=5cm]{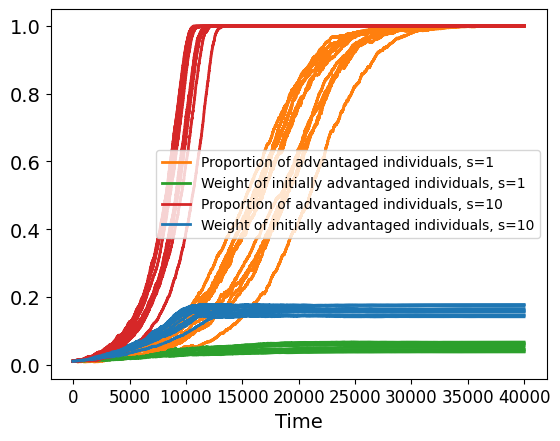}
\end{minipage}\quad
\begin{minipage}{0.46\linewidth}
\centering
    \includegraphics[height=5cm]{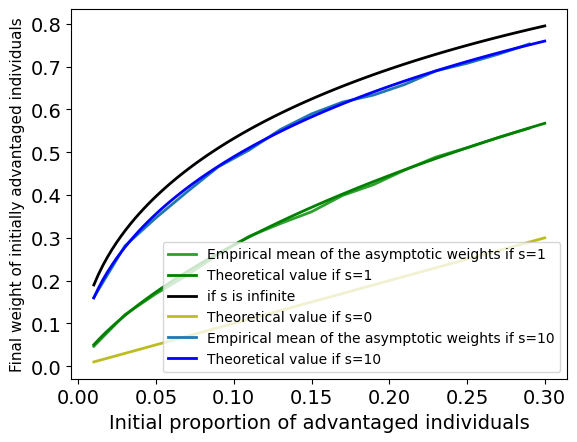}
\end{minipage}
 \caption{Left: For $N=1000$, and $a=1\%$, we plot $10$ realizations of the dynamics of the proportion of advantaged individuals (in red for $s=10$ and in orange for $s=1$) and the genetic weight of initially advantaged individuals (in blue for $s=10$ and in green for $s=1$). Right: For different values of initial proportion of advantaged individuals, we plot for $s=1$ (in green) and for $s=10$ (in blue) the mean of the weight at time $n=40000$ of the initially advantaged individuals over $10$ simulations. We also give the associated theoretical predictions, as well as the theoretical predictions for $s=0$ (in yellow) and for infinite $s$ (in black).}    \label{fig:simulations}
\end{figure}

\section{Proofs}\label{sec:proofs}
\subsection{A few Markovian properties}

We begin by presenting a series of results that shed light on various aspects of the dynamics of the process under consideration, and notably the interplay between the set and number of advantaged individuals, the pedigree, and the genetic weights of ancestors. 

\paragraph*{Number and set of advantaged individuals } We first focus on the set and number of advantaged individuals, whose dynamics happen to be particularly simple. Recall that $\mu_n\in I$ is the site of the mother (parent that transmits advantage) at time step $n$, while $\kappa_n\in I$ is the site of the individual that is killed at  step $n$. These notations combined to the modeling framework described in Section \ref{sec:model}
immediately give the following
\begin{proposition} \label{prop:Avantages}
The stochastic process $(\mathcal{Y}_n)_{n\in\mathbb{Z}_+}$ is a Markov chain, such that $$\mathcal{Y}_{n+1}=\begin{cases}
  \mathcal{Y}_n\cup\{\kappa_n\}  \qquad & \text{if $\mu_n\in\mathcal{Y}_n$ and $\kappa_n\notin\mathcal{Y}_n$} \\
  \mathcal{Y}_n\setminus\{\kappa_n\} &\text{if $\mu_n\notin\mathcal{Y}_n$ and $\kappa_n\in\mathcal{Y}_n$}\\
  \mathcal{Y}_n &\text{otherwise.}
\end{cases}$$
\end{proposition}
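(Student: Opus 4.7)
The plan is to unpack the model specification from Section \ref{sec:model}; there is almost no machinery needed. I would first establish the deterministic update rule, then the Markov property.

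For the update rule, I would start from the Mendelian transmission at the selected locus, which the paper already identifies as meaning that an individual is advantaged if and only if its mother is advantaged. At step $n$, the only change between the populations at time $n$ and $n+1$ is at site $\kappa_n$, where a new offspring replaces the dying individual; the mother of this offspring is (by definition of $\mu_n$) the one at site $\mu_n$. Hence the new occupant at $\kappa_n$ belongs to $\mathcal{Y}_{n+1}$ iff $\mu_n\in\mathcal{Y}_n$, while every other site $i\neq\kappa_n$ has the same occupant and status at times $n$ and $n+1$. A case split on $\mathbf{1}_{\mu_n\in\mathcal{Y}_n}$ and $\mathbf{1}_{\kappa_n\in\mathcal{Y}_n}$ yields exactly the three cases of the statement: if $\mu_n\in\mathcal{Y}_n$ the site $\kappa_n$ is added (or kept), if only $\kappa_n\in\mathcal{Y}_n$ it is removed, and if neither holds the set is unchanged.

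For the Markov property, I would observe that by construction of the dynamics the triple $(\mu_n,\pi_n,\kappa_n)$ is, conditional on $\mathcal{F}_n$, drawn as follows: $\mu_n$ and $\pi_n$ are independent and uniform on $I$, and independently $\kappa_n$ is drawn with probability proportional to the death weights, namely $1$ on $\mathcal{Y}_n$ and $1+s$ on $I\setminus\mathcal{Y}_n$. Thus the conditional law of $(\mu_n,\pi_n,\kappa_n)$ given $\mathcal{F}_n$ depends on $\mathcal{F}_n$ only through $\mathcal{Y}_n$. Combined with the deterministic update rule established above, this shows that the conditional law of $\mathcal{Y}_{n+1}$ given $\mathcal{F}_n$ is a function of $\mathcal{Y}_n$ alone, which is the Markov property.

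There is no real obstacle here; the statement is essentially a reformulation of the model. The only point requiring a line of justification is the Markov property, for which it is enough to note that the selection weights depend only on the current set $\mathcal{Y}_n$, and not on any earlier state of the pedigree.
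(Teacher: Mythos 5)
Your proposal is correct and follows essentially the same route as the paper's own (very short) proof: the update rule is read off from the fact that an individual is advantaged iff its mother is, and only the site $\kappa_n$ changes. You are slightly more explicit than the paper about why the Markov property holds (the law of $(\mu_n,\pi_n,\kappa_n)$ given $\mathcal{F}_n$ depends only on $\mathcal{Y}_n$), which the paper leaves implicit, but this is a refinement rather than a different argument.
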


\begin{proof}
If the mother is chosen among advantaged individuals then by the definition of the model described in Section \ref{sec:model}, the site $\kappa_n$ is added to (or remains in) the set of advantaged individuals. If the mother is chosen among non advantaged individuals then the site $\kappa_n$ is removed from the set of advantaged individuals, if it was present in this set.
\end{proof}

As a consequence,

\begin{proposition}\label{prop:Y}
The stochastic process  $(Y_n)_{n\in\mathbb{Z}_+}$ is a Markov chain such that if $Y_n=k\in \{0,1,...,N\}$, then $Y_{n+1}\in\{k-1,k,k+1\}$, and \begin{align*}\mathbb{P}(Y_{n+1}=k-1|Y_n=k)&=p_k\times\frac{1}{2+s},\\\mathbb{P}(Y_{n+1}=k+1|Y_n=k)&=p_k\times\frac{1+s}{2+s},\quad\text{and}\\\mathbb{P}(Y_{n+1}=k|Y_n=k)&=1-p_k,\end{align*} where $p_k=\frac{k(N-k)}{N\left(\frac{1}{2+s}k+\frac{1+s}{2+s}(N-k)\right)}=\frac{1}{\left(\frac{1}{2+s}\frac{N}{N-k}+\frac{1+s}{2+s}\frac{N}{k}\right)}.$ This Markov chain is absorbed in $0$ and in $N$.
\end{proposition}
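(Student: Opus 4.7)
The plan is to derive this directly from Proposition \ref{prop:Avantages}, which already classifies the three cases that can occur and, crucially, shows that $|Y_{n+1}-Y_n|\leq 1$ with the sign depending only on the types of $\mu_n$ and $\kappa_n$. Specifically, the decomposition in Proposition \ref{prop:Avantages} gives the equivalences
\[
\{Y_{n+1}=Y_n+1\}=\{\mu_n\in\mathcal{Y}_n,\ \kappa_n\notin\mathcal{Y}_n\},\qquad \{Y_{n+1}=Y_n-1\}=\{\mu_n\notin\mathcal{Y}_n,\ \kappa_n\in\mathcal{Y}_n\},
\]
and the complement gives $\{Y_{n+1}=Y_n\}$.

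Next I would compute, conditionally on $\mathcal{F}_n$ with $Y_n=k$, the joint law of $(\mathbf{1}_{\mu_n\in\mathcal{Y}_n},\mathbf{1}_{\kappa_n\in\mathcal{Y}_n})$. By the description of the model in Section \ref{sec:model}, the mother $\mu_n$ is uniform over $I$ and independent of $\kappa_n$, while $\kappa_n$ is chosen with probability proportional to death weights ($1$ on $\mathcal{Y}_n$, $1+s$ on its complement). Hence
\[
\mathbb{P}(\mu_n\in\mathcal{Y}_n\mid\mathcal{F}_n)=\frac{k}{N},\qquad \mathbb{P}(\kappa_n\in\mathcal{Y}_n\mid\mathcal{F}_n)=\frac{k}{k+(1+s)(N-k)},
\]
and these two Bernoulli variables are independent conditionally on $\mathcal{F}_n$. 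Multiplying and simplifying with $D_k:=k+(1+s)(N-k)$ yields
\[
\mathbb{P}(Y_{n+1}=k+1\mid\mathcal{F}_n)=\frac{(1+s)k(N-k)}{N\,D_k},\qquad \mathbb{P}(Y_{n+1}=k-1\mid\mathcal{F}_n)=\frac{k(N-k)}{N\,D_k},
\]
which a short rewriting puts in the announced form $p_k\frac{1+s}{2+s}$ and $p_k\frac{1}{2+s}$ respectively, with $p_k$ as defined in the statement; the self-loop probability is then $1-p_k$.

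Finally, I would conclude the Markov property of $(Y_n)$ by noting that the transition probabilities above depend on $\mathcal{F}_n$ only through $Y_n=k$; combined with the Markov property of $(\mathcal{Y}_n)$ from Proposition \ref{prop:Avantages}, this upgrades $(Y_n)$ to a Markov chain in its own filtration. Absorption at $0$ and $N$ is immediate since $p_k=0$ when $k(N-k)=0$, so $\mathbb{P}(Y_{n+1}=k\mid Y_n=k)=1$ for $k\in\{0,N\}$.

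There is no real obstacle here: the only tiny point requiring care is to recognize that the conditional independence of $\mu_n$ and $\kappa_n$ given $\mathcal{F}_n$ is exactly what the model assumes, and that this independence survives conditioning on $\mathcal{Y}_n$ (which is $\mathcal{F}_n$-measurable). The algebraic identity $\frac{k(N-k)}{N\,D_k}=\frac{p_k}{2+s}$ is routine and only used to match the form given in the statement.
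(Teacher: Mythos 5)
Your proposal is correct and follows essentially the same route as the paper: identify the three events via Proposition \ref{prop:Avantages}, use the (conditional) independence of the uniformly chosen mother $\mu_n$ and the weight-proportional choice of $\kappa_n$ to multiply $\frac{k}{N}$ by $\frac{(1+s)(N-k)}{k+(1+s)(N-k)}$ (and symmetrically), and read off absorption from $p_0=p_N=0$. Your treatment is if anything slightly more explicit than the paper's about why the transition law depends on $\mathcal{F}_n$ only through $k$, but the substance is the same.
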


\begin{proof} As summarized in Proposition \ref{prop:Avantages}, the number of advantaged individuals is increased by $1$ if the mother is advantaged while the dying individual is disadvantaged, which gives that $\mathbb{P}(Y_{n+1}=k+1|Y_n=k)=k/N\times\frac{(1+s)(N-k)}{k+(1+s)(N-k)}=\frac{1+s}{2+s}\times \frac{k(N-k)}{N(\frac{1}{2+s}k+\frac{1+s}{2+s}(N-k))}$. Similarly, the number of advantaged individuals is decreased by $1$ if the mother is disadvantaged while the dying individual is advantaged, which gives that $\mathbb{P}(Y_{n+1}=k-1|Y_n=k)=(N-k)/N\times\frac{k}{k+(1+s)(N-k)}=\frac{1}{2+s}\times \frac{k(N-k)}{N(\frac{1}{2+s}k+\frac{1+s}{2+s}(N-k))}$. Finally, the number of advantaged individuals remains the same if the mother and replaced individual have the same advantage status, which gives that $\mathbb{P}(Y_{n+1}=k|Y_n=k)=\frac{k}{N}\frac{k}{k+(1+s)(N-k)}+\frac{N-k}{N}\frac{(1+s)(N-k)}{k+(1+s)(N-k)}=\frac{k^2+(1+s)(N-k)^2}{N(k+(1+s)(N-k))}=1-\frac{(2+s)k(N-k)}{N(k+(1+s)(N-k))}=1-p_k$. As $p_0=p_N=0$, this gives that the states $0$ and $N$ are absorbing.
\end{proof}

The previous result implies that the skeleton of the Markov chain $(Y_n)_{n\in\mathbb{Z}_+}$ has a particularly simple dynamics. More precisely, let $\tau_0=0$, and $H_0=Y_0=Y_{\tau_0}$. Now for any $l\in\mathbb{Z}_+^*$, if $H_l\notin\{0,N\}$ let $\tau_{l+1}=\inf\{n>\tau_l|Y_n\neq H_l\}$ and $H_{l+1}=Y_{\tau_{l+1}}$. If $Y_{\tau_l}\in\{0,N\}$ let $H_{l+1}=H_l$. Then from Proposition \ref{prop:Y}, 
\begin{proposition}\label{prop-biasedRW}
The stochastic process $(H_l)_{l\in\mathbb{Z}_+}$ is a simple random walk absorbed in $\{0,N\}$: for any $l\in\mathbb{Z}_+$ such that $H_l\notin\{0,N\}$,
\begin{align}\mathbb{P}(H_{l+1}&=H_{l}+1)=\frac{1+s}{2+s}\\\mathbb{P}(H_{l+1}&=H_{l}-1)=\frac{1}{2+s},\end{align} and if $H_l\in\{0,N\}$, $H_{l+1}=H_l$. We denote by $S_k$ the first hitting time of $k\in\{0,1,...,N\}$ by the random walk $H$.
\end{proposition}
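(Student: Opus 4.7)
The plan is to derive Proposition \ref{prop-biasedRW} directly from Proposition \ref{prop:Y} by identifying $(H_l)$ as the jump chain of the birth--death type Markov chain $(Y_n)$, and then observing that the branching ratio between the two possible nonzero increments is independent of the current state. First I would check that the stopping times $\tau_l$ are well defined: for any $k\in\{1,\dots,N-1\}$, one has $p_k>0$, so conditionally on $\{Y_{\tau_l}=k\}$ the probability of a change at each subsequent step is bounded below, and $\tau_{l+1}$ is almost surely finite. The strong Markov property of $(Y_n)$ applied at the $(\mathcal{G}_n)$-stopping time $\tau_l$ then gives that $(H_l)$ is itself Markov.

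Next I would compute the transition kernel. Fix $k\in\{1,\dots,N-1\}$ and condition on $Y_n=k$ together with $\{Y_{n+1}\neq k\}$; using Proposition \ref{prop:Y} this yields
\begin{equation*}
\mathbb{P}(Y_{n+1}=k+1\mid Y_n=k,\,Y_{n+1}\neq k)=\frac{p_k\cdot\frac{1+s}{2+s}}{p_k}=\frac{1+s}{2+s},
\end{equation*}
and symmetrically $\mathbb{P}(Y_{n+1}=k-1\mid Y_n=k,\,Y_{n+1}\neq k)=\frac{1}{2+s}$. The crucial point is that $p_k$ cancels and the resulting probabilities do not depend on $k$. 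Combined with the strong Markov property, this says that conditionally on $H_l=k\notin\{0,N\}$ and on the past, $H_{l+1}-H_l$ equals $+1$ with probability $\frac{1+s}{2+s}$ and $-1$ with probability $\frac{1}{2+s}$, which is precisely the definition of a spatially homogeneous biased random walk. The behavior at the boundary is handled by the convention $H_{l+1}=H_l$ when $H_l\in\{0,N\}$, so $\{0,N\}$ are absorbing.

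There is essentially no obstacle here: the proposition is a one-line consequence of the explicit form of the one-step transitions in Proposition \ref{prop:Y}, the key structural observation being that the common factor $p_k$ (which carries all the $k$-dependence) drops out when we condition on the chain actually moving. The only care needed is to invoke the strong Markov property at each $\tau_l$ so that the successive increments of $H$ are independent, and to treat the absorbing states separately to match the stated convention.
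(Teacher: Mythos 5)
Your proposal is correct and matches the paper's (implicit) argument: the paper states this proposition as an immediate consequence of Proposition \ref{prop:Y}, the point being exactly the cancellation of $p_k$ when conditioning on the chain moving, which you make explicit. Your added care about the finiteness of the $\tau_l$ and the strong Markov property is a sound elaboration of the same route, not a different one.
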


This result allows to prove very simply the first item of the following proposition:
\begin{proposition}\label{prop-Y-TN} Let $a\in(0,1)$. If $Y_0=\lfloor aN\rfloor$, then 
\begin{description}
\item[$(i)$] The fixation probability of advantaged individuals satisfies \begin{align*}\mathbb{P}(T_N<T_0)\underset{N\rightarrow\infty}{\longrightarrow} 1. \end{align*} 
\item[$(ii)$] Let $b\in [a,1)$. There exists $C>0$ (depending on $b$) such that $\mathbb{P}(T_{\lfloor bN\rfloor}> N C)\rightarrow 0$ when $N$ goes to infinity.
\end{description}
\end{proposition}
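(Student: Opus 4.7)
The plan is straightforward for item (i), and for item (ii) relies on coupling the skeleton $H$ with the holding-time decomposition of $(Y_n)$.

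For item (i), since $H$ and $Y$ visit the same sequence of distinct states, $\{T_N<T_0\}=\{S_N<S_0\}$. Applying the classical Gambler's ruin formula to the biased walk $H$ of Proposition~\ref{prop-biasedRW}, with $+1$ probability $p=(1+s)/(2+s)$ and $-1$ probability $q=1/(2+s)$, yields
\[
\mathbb{P}(S_N<S_0\mid H_0=k)=\frac{1-r^k}{1-r^N},\qquad r=q/p=(1+s)^{-1}\in(0,1).
\]
Setting $k=\lfloor aN\rfloor$ and letting $N\to\infty$, both $r^k$ and $r^N$ tend to $0$, so the ratio tends to $1$.

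For item (ii), I would use the skeleton decomposition $T_{\lfloor bN\rfloor}=\sum_{l=0}^{S_{\lfloor bN\rfloor}-1}\xi_l$, where each holding time $\xi_l$ is, conditionally on the skeleton, geometric with parameter $p_{H_l}$ and independent across $l$, and then combine three ingredients. First, the same Gambler's-ruin estimate, applied with absorbing barriers $\lfloor aN/2\rfloor$ and $\lfloor bN\rfloor$, shows that the probability that $H$ ever descends below $\lfloor aN/2\rfloor$ before hitting $\lfloor bN\rfloor$ is $O(r^{aN/2})$, hence vanishes. Second, viewing $p_k$ as a continuous function of $x=k/N$, namely $x\mapsto (2+s)x(1-x)/(x+(1+s)(1-x))$, it is bounded below by some $c=c(a,b,s)>0$ on the compact interval $[a/2,b]$, so on the good event every $p_{H_l}\geq c$. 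Third, $S_{\lfloor bN\rfloor}$ itself is $O(N)$ with high probability: the increments of $H$ are i.i.d.\ $\pm 1$ of mean $s/(2+s)>0$, and Hoeffding's inequality gives $\mathbb{P}(S_{\lfloor bN\rfloor}>C_1N)\to 0$ for $C_1=C_1(a,b,s)$ large enough. Combining these, on a high-probability event $T_{\lfloor bN\rfloor}$ is stochastically dominated by a sum of at most $C_1N$ independent geometric variables with parameter $c$, which has mean and variance $O(N)$; Chebyshev's inequality then yields $\mathbb{P}(T_{\lfloor bN\rfloor}>CN)\to 0$ for $C$ large enough, proving item (ii).

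The only mild subtlety is the degeneracy of $p_k$ as $k/N\to 0$, which is what forces the preliminary control of deep excursions of the skeleton. Once those are ruled out, all holding times share a uniform geometric upper tail and the proof reduces to routine concentration for sums of independent variables.
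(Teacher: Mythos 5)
Your proposal is correct and follows essentially the same route as the paper: for (i) the gambler's-ruin formula applied to the biased skeleton walk $H$, and for (ii) the decomposition of $T_{\lfloor bN\rfloor}$ into skeleton steps times conditionally geometric holding times, combined with (a) ruling out deep downward excursions of $H$ via gambler's ruin, (b) a linear-in-$N$ bound on $S_{\lfloor bN\rfloor}$ from the positive drift, and (c) a uniform lower bound on $p_k$ away from $0$ followed by Chebyshev on a dominating sum of i.i.d.\ geometrics. The only cosmetic difference is that you invoke Hoeffding on the increments of $H$ where the paper couples with an unabsorbed walk $\tilde H$ and uses the law of large numbers; since your excursion control already keeps $H$ away from the absorbing states before $S_{\lfloor bN\rfloor}$, this is harmless.
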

\begin{proof} We recall that $S_k$ is the first hitting time of $k\in\{0,1,...,N\}$ by the biased random walk $(H_l)_{l\in\mathbb{Z}_+}$. Remark that $T_N<T_0$  if and only if $S_N<S_0$. Then result $(i)$ falls from Proposition \ref{prop-biasedRW} and from a classical result on "gambler's ruin"  (see for example Chap. XIV-1 of \cite{feller-vol-1}).  
Now, let us turn our attention to (ii). The proof is decomposed in two parts: firstly we prove the result for the absorbed biased random walk $(H_l)_{l\in\mathbb{Z}_+}$, i.e. we prove that for any $b\in[a,1)$ there exists a constant $K>0$ such that  $\mathbb{P}(S_{\lfloor bN\rfloor}> N K)\rightarrow 0$ when $N$ goes to infinity. Secondly we prove that the Markov chain $(Y_n)_{n\in\mathbb{Z}_+}$ cannot stay for a long time at each position, by noting that the parameters $p_k=\mathbb{P}(Y_{n+1}\neq Y_n|Y_n=k)$ are bounded from below when the Markov chain $(Y_n)_{n\in\mathbb{Z}_+}$ stays away from $\epsilon N$, which occurs with high probability. We conclude using the Bienaym\'e-Chebyshev inequality.
\\
More formally, we bound the probability of interest as follows. For any $C, K>0$,
\begin{align}\label{eq-decompoT}\mathbb{P}(T_{\lfloor bN\rfloor}>CN)&\leq \mathbb{P}(T_{\lfloor bN\rfloor}>CN,S_{\lfloor \epsilon N\rfloor}\geq S_{\lfloor bN\rfloor},S_{\lfloor bN\rfloor}\leq KN)\\&+\mathbb{P}(S_{\lfloor bN\rfloor}>KN)\\&+\mathbb{P}(S_{\lfloor \epsilon N\rfloor}<S_{\lfloor bN\rfloor})\end{align} and control the three terms of the right hand side of this equation. 
For the last one, note that for any $\epsilon<a$, $\mathbb P(S_{\lfloor bN\rfloor}<S_{\lfloor \epsilon N\rfloor})=\frac{(1+s)^{\lfloor \epsilon N\rfloor-\lfloor a N\rfloor}-1}{(1+s)^{\lfloor \epsilon N\rfloor-\lfloor bN\rfloor}-1}\rightarrow 1$ as $N$ goes to infinity.
For the second term, let us introduce a random walk $\tilde{H}$ which has the same transition probabilities and starting point $\lfloor aN\rfloor$ as the random walk $H$ but is not absorbed in $\{0,N\}$, i.e. which is such that $\mathbb{P}(\tilde{H}_{l+1}=\tilde{H}_{l}+1)=\frac{1+s}{2+s}$ and $\mathbb{P}(\tilde{H}_{l+1}=\tilde{H}_{l}+1)=\frac{1}{2+s}$ whatever value  $\tilde{H}_l$ takes in $\mathbb{Z}$. We have for any $K>0$,
\begin{equation}\label{eq-control}
\begin{aligned}\mathbb{P}(S_{\lfloor bN\rfloor}>KN)\leq\mathbb{P}(H_{KN}<\lfloor bN\rfloor)&\leq \mathbb{P}(H_{KN}=0)+\mathbb{P}(0<\tilde{H}_{KN}<\lfloor bN\rfloor))\\&\leq \mathbb{P}(H_{KN}=0)+\mathbb{P}(\tilde{H}_{KN}<bN)).\end{aligned}\end{equation}
Now recall that $H_0=Y_0=\lfloor aN\rfloor$. Therefore $$\mathbb{P}(H_{KN}=0)=\mathbb{P}(S_0\leq KN)\leq \mathbb{P}(S_0<\infty)\rightarrow 0$$ as $N$ goes to infinity from the same gambler's ruin result as before.
Besides, by the Law of large numbers, $\frac{\tilde{H}_{KN}-\lfloor aN\rfloor}{N}$ converges to  $\frac{s}{2+s} K$ when $N$ goes to infinity. Hence $\mathbb{P}(\tilde{H}_{KN}<\lfloor bN\rfloor)$ goes to $0$ when $N$ goes to infinity as soon as $K>\frac{(b-a)(2+s)}{s}$. Therefore from \eqref{eq-control}, $\mathbb{P}(S_{\lfloor bN\rfloor}>KN)$ goes to $0$ when $N$ goes to infinity if $K$ is large enough.
\\
We finally consider the first term of the right-hand side of Equation \eqref{eq-decompoT}. We know that $$\mathbb{P}(Y_{n+1}\neq Y_n|Y_n=k)=p_k= \frac{k(N-k)}{N\left(\frac{1}{2+s}k+\frac{1+s}{2+s}(N-k)\right)}= f(k/N),$$ with $f(x)=\frac{1}{\frac{1}{2+s}\frac{1}{1-x}+\frac{1+s}{2+s}\frac{1}{x}}.$ Since $f'(0+)>0$, $f(0)=0$ and $f>0$ on $(0,b]$, there exists $\epsilon>0$ such that $p_k>p_{\lfloor \epsilon N\rfloor}$ for all $\lfloor \epsilon N\rfloor\leq k\leq \lfloor  bN\rfloor$ if $N$ is large enough. Moreover, as $f'(0)=\frac{2+s}{1+s}$, $p_{\lfloor \epsilon N\rfloor}>\epsilon$ if $N$ is large enough.
Now on $T_k<\infty$, $T_k=\sum_{i=0}^{S_k-1}L_i $, with the $L_i$'s being, conditionally on $H$, independent geometric random variables with parameter $p_{H_i}^{}$.
Therefore
\begin{align*}
\mathbb{P}(T_{\lfloor bN\rfloor}>CN \,| \,S_{\lfloor \epsilon N\rfloor}\geq S_{\lfloor bN\rfloor},S_{\lfloor bN\rfloor}\leq KN)&\leq \mathbb{P}\Big(\sum_{i=0}^{KN}L_i>CN\Big)
\end{align*} and therefore
\begin{align*}
\mathbb{P}(T_{\lfloor bN\rfloor}>CN, S_{\lfloor \epsilon N\rfloor}\geq S_{\lfloor bN\rfloor},S_{\lfloor bN\rfloor}\leq KN)&\leq \mathbb{P}\Big(\sum_{i=0}^{KN}L_i>CN\Big).
\end{align*}
Now, recall that any geometric r.v. of parameter $p'_2>p'_1$ can be decomposed into the sum of $M$ independent geometric random variables of parameter $p'_1$, $M$ being an geometric independent random variable of parameter $p'_1/p'_2$. We can infer that:
\begin{align*}
\mathbb{P}(T_{\lfloor bN\rfloor}>CN, S_{\lfloor \epsilon N\rfloor}\geq S_{\lfloor bN\rfloor},S_{\lfloor bN\rfloor}\leq KN)&\leq \mathbb{P}\Big(\sum_{i=0}^{KN}D_i>CN\Big)
\end{align*}
where the $D_i$'s are independent geometric random variables with parameter $p_{\lfloor \epsilon N\rfloor}$. Since the expectation and the variance of $\sum_{i=0}^{KN}D_i$ are respectively smaller than $\frac{KN}{\epsilon}$ and $\frac{KN}{\epsilon^2}$ we conclude using Bienaym\'e-Chebyshev inequality if we take $C$ larger than $\frac{K}{\epsilon}$.
\end{proof}
Note that the previous result can be extended to the case where $Y_0/N$ converges in probability to $a\in(0,1)$.
\bigskip
\paragraph*{Ancestors genetic weights} Let us now focus on ancestors weights. Due to selection, the sequence of ancestors weights $((W_n(i,j)_{1\leq i,j\leq N})_{n\in\mathbb{Z}_+}$ is not a Markov chain. However, the couple $((W_n(i,j)_{1\leq i,j\leq N},\mathcal{Y}_n)_{n\in\mathbb{Z}_+}$ is, and satisfies the following property.

\begin{lemma}\label{prop-WY}
The sequence $(W_n,\mathcal{Y}_n)_{n\in\mathbb{Z}_+}$ is a Markov chain, with transition rules \begin{align}\mathcal{Y}_{n+1}&=\begin{cases}
  \mathcal{Y}_n\cup\{\kappa_n\} \qquad &\text{if $\mu_n\in\mathcal{Y}_n$ and  $\kappa_n\notin\mathcal{Y}_n$ } \\
  \mathcal{Y}_n\setminus\{\kappa_n\}\qquad &\text{if $\mu_n\notin\mathcal{Y}_n$ and $\kappa_n\in\mathcal{Y}_n$}\\
  \mathcal{Y}_n \qquad &\text{if $\mu_n,\kappa_n\notin\mathcal{Y}_n$ or $\mu_n,\kappa_n\in\mathcal{Y}_n$,} and
\end{cases}\label{eq-Y}\\
W_{n+1}(i,j)&=\begin{cases}W_n(i,j) \quad &\text{if $i\neq \kappa_n$}\\
    \frac{W_n(\mu_n,j)+W_n(\pi_n,j)}{2} \quad &\text{if $i=\kappa_n$,}\end{cases}\label{eq-W}\end{align} where $\mu_n$ and $\pi_n$ are drawn uniformly in $1,...,N$, and $\kappa_n$ is such that $\mathbb{P}(\kappa_n=i)=\frac{1+s\mathbf{1}_{\{i\notin\mathcal{Y}_n\}}}{Y_n+(1+s)(N-Y_n)}$ for all $i\in\{1,..,N\}$.
\end{lemma}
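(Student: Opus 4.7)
The plan is to break the proof into three pieces: first, identify how $(\mu_n,\pi_n,\kappa_n)$ are generated at step $n$ and show that their joint conditional law given $\mathcal{F}_n$ depends only on $\mathcal{Y}_n$; second, verify the deterministic transition rules for $\mathcal{Y}_{n+1}$ and for $W_{n+1}(i,j)$ as functions of $(\mu_n,\pi_n,\kappa_n)$ and the current state $(W_n,\mathcal{Y}_n)$; third, assemble these into the Markov property for the pair.

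For the first piece, by the modeling assumptions of Section \ref{sec:model}, $\mu_n$ and $\pi_n$ are drawn uniformly on $I$, independently of each other and of everything before time $n$, while $\kappa_n$ is drawn independently with probability proportional to the death weight of each individual, namely $1$ for advantaged and $1+s$ for non-advantaged individuals. Normalizing yields
\begin{equation*}
\mathbb{P}(\kappa_n=i\mid \mathcal{F}_n)=\frac{1+s\mathbf{1}_{\{i\notin \mathcal{Y}_n\}}}{Y_n+(1+s)(N-Y_n)},
\end{equation*}
which depends on $\mathcal{F}_n$ only through $\mathcal{Y}_n$. In particular, the conditional law of $(\mu_n,\pi_n,\kappa_n)$ given $\mathcal{F}_n$ depends only on $\mathcal{Y}_n$.

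For the second piece, equation (\ref{eq-Y}) is a rewriting of Proposition \ref{prop:Avantages} (the extra condition $\kappa_n\notin\mathcal{Y}_n$ added to the first case is vacuous, since $\mathcal{Y}_n\cup\{\kappa_n\}=\mathcal{Y}_n$ when $\kappa_n\in\mathcal{Y}_n$, which merges with the last case). For (\ref{eq-W}), I appeal directly to the definition (\ref{eq:defA}) of $W_n$ and to the construction of the ancestral random walk on the pedigree $G$ recalled in Section \ref{sec:model}. Between times $n$ and $n+1$, only the site $\kappa_n$ is replaced: for any $i\neq\kappa_n$ the vertex $(i,n+1)$ has a single outgoing arrow, pointing to $(i,n)$, so the first step of the random walk started at $(i,n+1)$ lands deterministically at $(i,n)$; applying the Markov property of the random walk and the definition of $W_n$ then gives $W_{n+1}(i,j)=W_n(i,j)$. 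When $i=\kappa_n$, the two outgoing arrows of $(\kappa_n,n+1)$ point to $(\mu_n,n)$ and $(\pi_n,n)$, and the Mendelian (uniform) choice at an independent locus sends the walker to each with probability $1/2$, so $W_{n+1}(\kappa_n,j)=\tfrac{1}{2}W_n(\mu_n,j)+\tfrac{1}{2}W_n(\pi_n,j)$.

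Putting the two pieces together, $(W_{n+1},\mathcal{Y}_{n+1})$ is a deterministic function of $(W_n,\mathcal{Y}_n,\mu_n,\pi_n,\kappa_n)$, and the conditional law of $(\mu_n,\pi_n,\kappa_n)$ given $\mathcal{F}_n$ depends only on $\mathcal{Y}_n$. Since $W_n$ is $\mathcal{G}_n$-measurable and hence $\mathcal{F}_n$-measurable, the conditional distribution of $(W_{n+1},\mathcal{Y}_{n+1})$ given $\mathcal{F}_n$ is a function of $(W_n,\mathcal{Y}_n)$ alone, which, after restricting to the natural filtration of the pair, yields the claimed Markov property. There is no genuine technical obstacle here; the only conceptual point worth underlining is that $(W_n)$ alone fails to be a Markov chain because the selective bias in the law of $\kappa_n$ requires knowing $\mathcal{Y}_n$, information that is not encoded in the sole knowledge of the weights $W_n$, and augmenting the state with $\mathcal{Y}_n$ is precisely what restores the Markov property.
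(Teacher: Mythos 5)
Your proposal is correct and follows essentially the same route as the paper, whose own proof is a one-line appeal to Proposition \ref{prop:Avantages} for \eqref{eq-Y} and to the model of Section \ref{sec:model} together with definition \eqref{eq:defA} for \eqref{eq-W}; you simply make explicit the two ingredients the paper leaves implicit (that the conditional law of $(\mu_n,\pi_n,\kappa_n)$ given $\mathcal{F}_n$ depends only on $\mathcal{Y}_n$, and that $(W_{n+1},\mathcal{Y}_{n+1})$ is a deterministic function of the current state and of $(\mu_n,\pi_n,\kappa_n)$). The added remark on why $(W_n)$ alone is not Markovian is accurate and consistent with the paper's own comment preceding the lemma.
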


\begin{proof} Equation \eqref{eq-Y} is equivalent to the result stated in Proposition \ref{prop:Y} and Equation \eqref{eq-W} is a consequence of the model presented in Section \ref{sec:model} and the definition \eqref{eq:defA}.
\end{proof}

Now recall the definition of the two key quantities 
\begin{align*}
U_n&=\sum_{l\in\mathcal{Y}_n}\sum_{l'\in\mathcal{Y}_0} W_n(l,l'), \quad\text{and}\quad
V_n=\sum_{l\notin\mathcal{Y}_n}\sum_{l'\in\mathcal{Y}_0} W_n(l,l')
\end{align*} and note that $U_0=Y_0$ and $V_0=0$. From now on we consider the sequence of the rescaled triplets:
$$Z_n=\left(\frac{Y_n}{N},\frac{U_n}{N},\frac{V_n}{N}\right)\in[0,1]^3$$ and denote by $(\mathcal{F}^Z_n,n\geq0)$ its natural filtration. We also denote by $\mathcal{C}^2([0,1]^3)$ the set of real-valued twice continuously differentiable functions on $[0,1]^3$. Our first aim in this section is to present the dynamics of this stochastic process and to prove that when the population size goes to infinity, this stochastic process converges to the solution of a dynamical system which can be explicitly solved.

\medskip
First, the following proposition follows from Lemma \ref{prop-WY}: 

\begin{proposition}\label{prop-Z} The sequence $(Z_n)_{n\in\mathbb{Z}_+}=\left(\frac{Y_n}{N},\frac{U_n}{N},\frac{V_n}{N}\right)_{n\in\mathbb{Z}_+}$ (which is not Markovian) is such that
\begin{equation}\label{eq-dynamiqueZ}\begin{aligned}
Z_{n+1}=Z_n+\frac{1}{N}&\left[\left(1,\sum_{l'\in\mathcal{Y}_0}\frac{W_n(\mu_n,l')+W_n(\pi_n,l')}{2},\sum_{l'\in\mathcal{Y}_0}-W_n(\kappa_n,l')\right)\mathbf{1}_{\{\mu_n\in\mathcal{Y}_n,\kappa_n\notin\mathcal{Y}_n\}}\right.\\&+\left(-1,-\sum_{l'\in\mathcal{Y}_0}W_n(\kappa_n,l'),\sum_{l'\in\mathcal{Y}_0}\frac{W_n(\mu_n,l')+W_n(\pi_n,l')}{2}\right)\mathbf{1}_{\{\mu_n\notin\mathcal{Y}_n,\kappa_n\in\mathcal{Y}_n\}}\\&+\left(0,\sum_{l'\in\mathcal{Y}_0}\frac{W_n(\mu_n,l')+W_n(\pi_n,l')}{2}-\sum_{l'\in\mathcal{Y}_0}W_n(\kappa_n,l'),0\right)\mathbf{1}_{\{\mu_n\in\mathcal{Y}_n,\kappa_n\in\mathcal{Y}_n\}}\\&+\left.\left(0,0,\sum_{l'\in\mathcal{Y}_0}\frac{W_n(\mu_n,l')+W_n(\pi_n,l')}{2}-\sum_{l'\in\mathcal{Y}_0}W_n(\kappa_n,l')\right)\mathbf{1}_{\{\mu_n\notin\mathcal{Y}_n,\kappa_n\notin\mathcal{Y}_n\}}\right].\end{aligned}
\end{equation}
In particular (as $\sum_jW_n(i,j)\leq1$), $\|Z_{n+1}-Z_n\|\leq \sqrt{3}/N$ for all $n\geq0$, and 
\begin{equation}\label{eq-ExpDZ}\begin{aligned}\mathbb{E}\left(Z_{n+1}-Z_n|\mathcal{F}^Z_n\right)=\frac{1}{N}\Big(&\frac{sY_n/N(1-Y_n/N)}{Y_n/N+(1+s)(1-Y_n/N)},\\&\frac{U_n}{2N}+\frac{U_n+V_n}{2N}\frac{Y_n}{N}-\frac{U_n/N}{Y_n/N+(1+s)(1-Y_n/N)},\\&\frac{V_n}{2N}+\frac{U_n+V_n}{2N}\left(1-\frac{Y_n}{N}\right)-\frac{(1+s)V_n/N}{Y_n/N+(1+s)(1-Y_n/N)}\Big).\end{aligned}\end{equation}
\end{proposition}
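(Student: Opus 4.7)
The plan is to derive the update formula \eqref{eq-dynamiqueZ} from Lemma \ref{prop-WY} by a case analysis on the four disjoint events determined by $(\mathbf 1_{\mu_n\in\mathcal Y_n},\mathbf 1_{\kappa_n\in\mathcal Y_n})$, then obtain the $\sqrt 3/N$ bound from the subprobability property of $W_n$, and finally compute \eqref{eq-ExpDZ} by exploiting the conditional independence of $\mu_n,\pi_n,\kappa_n$ given $\mathcal F_n$.

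For the update formula, I fix $n$ and split according to the four cases. In each case $\mathcal Y_{n+1}$ is given by \eqref{eq-Y}, and by \eqref{eq-W} the matrix $W_{n+1}$ coincides with $W_n$ except at row $\kappa_n$, which is replaced by $\tfrac12(W_n(\mu_n,\cdot)+W_n(\pi_n,\cdot))$. I expand $U_{n+1}=\sum_{l\in\mathcal Y_{n+1}}\sum_{l'\in\mathcal Y_0}W_{n+1}(l,l')$ by isolating the row $\kappa_n$, tracking whether that row is being added to, removed from, or stays in the summation domain; the analogous expansion handles $V_{n+1}$, and the three differences produce, case by case, the four indicator-weighted vectors of \eqref{eq-dynamiqueZ}. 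The bound $\|Z_{n+1}-Z_n\|\leq\sqrt 3/N$ is then immediate, since $\sum_{l'\in\mathcal Y_0}W_n(i,l')\in[0,1]$ for every $i$ (as $W_n(i,\cdot)$ is a subprobability measure on $I$), so each coordinate of $N(Z_{n+1}-Z_n)$ is a difference of two such quantities.

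For the conditional expectation, I regroup the four cases to rewrite the increment compactly as
\[N(Z_{n+1}-Z_n)=\bigl(\mathbf 1_{\mu_n\in\mathcal Y_n}-\mathbf 1_{\kappa_n\in\mathcal Y_n},\ \mathbf 1_{\mu_n\in\mathcal Y_n}B_n-\mathbf 1_{\kappa_n\in\mathcal Y_n}A_n,\ \mathbf 1_{\mu_n\notin\mathcal Y_n}B_n-\mathbf 1_{\kappa_n\notin\mathcal Y_n}A_n\bigr),\]
where $A_n:=\sum_{l'\in\mathcal Y_0}W_n(\kappa_n,l')$ and $B_n:=\tfrac12\sum_{l'\in\mathcal Y_0}(W_n(\mu_n,l')+W_n(\pi_n,l'))$. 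Conditionally on $\mathcal F_n$, the triple $(\mu_n,\pi_n,\kappa_n)$ is independent with the laws recalled in Lemma \ref{prop-WY}; the $Y$-drift then follows from multiplying marginals. For the $U$-drift, I split $B_n$ into its $\mu_n$- and $\pi_n$-halves: the $\mu_n$-half paired with $\mathbf 1_{\mu_n\in\mathcal Y_n}$ sums directly to $U_n/(2N)$ by definition of $U_n$, while the $\pi_n$-half factors by independence to $(Y_n/N)(U_n+V_n)/(2N)$; the subtracted term $\mathbb E[\mathbf 1_{\kappa_n\in\mathcal Y_n}A_n\mid\mathcal F_n]=U_n/(Y_n+(1+s)(N-Y_n))$ follows immediately from the weighted law of $\kappa_n$. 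The $V$-drift is symmetric under $Y_n\leftrightarrow N-Y_n$, with the extra factor $1+s$ attached to the $\kappa_n$ term reflecting the heavier death weight of disadvantaged individuals. Since the resulting expression is a function of $Z_n$ alone and $\mathcal F^Z_n\subset\mathcal F_n$, the tower property delivers \eqref{eq-ExpDZ}.

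The only genuine obstacle is the bookkeeping for the four-case analysis in the first step; once the compact form of the increment is isolated, the remaining expectation calculations are elementary.
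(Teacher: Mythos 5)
Your proposal is correct and follows essentially the same route as the paper: derive the increment from Lemma \ref{prop-WY} by the four-case analysis on $(\mathbf 1_{\mu_n\in\mathcal Y_n},\mathbf 1_{\kappa_n\in\mathcal Y_n})$, then take conditional expectations using the uniform laws of $\mu_n,\pi_n$ and the weighted law of $\kappa_n$ (e.g.\ the term $\frac{U_n}{2N}$ arising from $\mathbb{E}\big(\sum_{l'\in\mathcal Y_0}\tfrac{W_n(\mu_n,l')}{2}\mathbf 1_{\mu_n\in\mathcal Y_n}\big)$, exactly as in the paper). Your compact rewriting of the increment via $A_n$ and $B_n$, and the explicit tower-property step passing from $\mathcal F_n$ to $\mathcal F^Z_n$, are just a cleaner and slightly more careful presentation of the same computation.
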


\begin{proof} 
Equation \eqref{eq-dynamiqueZ} follows from the model described in Section \ref{sec:model} (i.e. from Lemma \ref{prop-WY}), and Equation \eqref{eq-ExpDZ} follows from \eqref{eq-dynamiqueZ} and from the fact that $\mu_n$ and $\pi_n$ are uniformly chosen in $\{1,...,N\}$ and $\kappa_n\in\mathcal{Y}_n$ with probability $\frac{Y_n}{Y_n+(1+s)(N-Y_n)}$. For example the first term $\frac{U_n}{2N}$ in the second coordinate of the right-hand side of Equation \eqref{eq-ExpDZ} is equal to $\mathbb{E}\Big(\sum_{l'\in\mathcal{Y}_0}\frac{W_n(\mu_n,l')}{2}\mathbf{1}_{\{\mu_n\in\mathcal{Y}_n\}}\Big)$ which appears when adding the first and third terms in Equation \eqref{eq-dynamiqueZ}.
\end{proof}

From the previous proposition we will derive the convergence, as the population size goes to infinity, of the stochastic sequence $(Z_{\lfloor N.\rfloor})_N$, towards a solution of the following dynamical system:

\begin{equation}\label{eq:dyn-system}\begin{cases}y'=\frac{sy(1-y)}{y+(1+s)(1-y)}\\u'=\left[\frac{u}{2}+\frac{u+v}{2}y-\frac{u}{y+(1+s)(1-y)}\right]\\
v'=\left[\frac{v}{2}+\frac{u+v}{2}(1-y)-\frac{(1+s)v}{y+(1+s)(1-y)}\right],\end{cases}\end{equation}
which is a first element of Theorem \ref{thmCvceDeterministe}.
\subsection{Dynamical system}

We first determine this solution of the system.

\begin{proposition} \label{prop-z} The differential equation \eqref{eq:dyn-system} admits a unique solution $z_t:=(y_t,u_t,v_t)_{t\geq0}$ starting from $(a,a,0)$ with $a\in(0,1)$. This solution satisfies:
\begin{equation}\label{eq-sol-syst}\left\{\begin{aligned}
      y_t&= F^{-1}\left(\frac{a^{1+s}}{1-a}\exp(st)\right) \qquad\text{where $F:x\rightarrow\frac{x^{1+s}}{1-x}$ maps $[0,1)$ onto $[0,\infty)$}\\
      u_t&= y_t\frac{a^{\frac{1+s}{2s}}}{(1-a)^{\frac{1}{2s}}}\left[\frac{(1-y_t)^{\frac{1}{2s}}}{y_t^{\frac{1+s}{2s}}}+\int_a^{y_t}\frac{(1-x)^{\frac{1}{2s}}}{x^{\frac{1+s}{2s}}}\left[\frac{1}{2}+\frac{1}{2s}\frac{1}{1-x}\right]dx\right] \\
      v_t&= (1-y_t)\frac{a^{\frac{1+s}{2s}}}{(1-a)^{\frac{1}{2s}}}\int_a^{y_t}\frac{(1-x)^{\frac{1}{2s}}}{x^{\frac{1+s}{2s}}}\left[\frac{1}{2}+\frac{1}{2s}\frac{1}{1-x}\right]dx. \\
    \end{aligned}\right.\end{equation}
\end{proposition}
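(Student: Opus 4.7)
The plan is to exploit the triangular structure of \eqref{eq:dyn-system}: $y$ satisfies an autonomous ODE and drives a linear, non-autonomous system in $(u,v)$. I would first integrate the $y$-equation by separation of variables. The partial fraction decomposition $\frac{y+(1+s)(1-y)}{y(1-y)} = \frac{1+s}{y}+\frac{1}{1-y}$ gives immediately $\ln F(y_t) - \ln F(a) = st$, i.e.\ $F(y_t) = \frac{a^{1+s}}{1-a}e^{st}$ with $F(x)=x^{1+s}/(1-x)$. Since $F$ is a smooth strictly increasing bijection of $[0,1)$ onto $[0,\infty)$, this uniquely determines $y_t$, which lies in $[a,1)$ for all $t\geq 0$.

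Next I would perform the change of variables $\phi_t := u_t/y_t$ and $\psi_t := v_t/(1-y_t)$, well-defined since $y_t\in(0,1)$. Using the identity $y+(1+s)(1-y) = 1+s(1-y)$, a direct computation shows that the denominators in the equations for $u'$ and $v'$ exactly cancel the contributions from $y'/y$ and $y'/(1-y)$, yielding the clean linear system
$$\phi'_t=\frac{1-y_t}{2}(\psi_t-\phi_t), \qquad \psi'_t=\frac{y_t}{2}(\phi_t-\psi_t).$$
Subtracting the two equations gives $(\phi-\psi)' = -\tfrac12(\phi-\psi)$; using $\phi_0=u_0/y_0=1$ and $\psi_0=v_0/(1-y_0)=0$ then yields $\phi_t-\psi_t = e^{-t/2}$.

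With $\phi-\psi$ known, the $\psi$-equation reduces to $\psi'_t=\tfrac{y_t}{2}e^{-t/2}$, which integrates to $\psi_t=\int_0^t \tfrac{y_\tau}{2}e^{-\tau/2}d\tau$. I would then change variables from $\tau$ to $y=y_\tau$ using $d\tau = \tfrac{1+s(1-y)}{sy(1-y)}dy$ and substitute the explicit form $e^{-\tau/2} = \bigl(\tfrac{a^{1+s}(1-y)}{(1-a)y^{1+s}}\bigr)^{1/(2s)}$, which follows from raising $F(y_\tau)=\tfrac{a^{1+s}}{1-a}e^{s\tau}$ to the power $1/(2s)$. After algebraic simplification the integrand becomes $\tfrac{(1-y)^{1/(2s)}}{y^{(1+s)/(2s)}}\bigl[\tfrac{1}{2}+\tfrac{1}{2s(1-y)}\bigr]$ up to the constant prefactor $a^{(1+s)/(2s)}/(1-a)^{1/(2s)}$, yielding the claimed expression for $v_t=(1-y_t)\psi_t$; adding $y_t e^{-t/2}$ to $y_t\psi_t$ then produces the formula for $u_t=y_t\phi_t$.

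Uniqueness of the solution through $(a,a,0)$ follows from Cauchy--Lipschitz, since the right-hand side of \eqref{eq:dyn-system} is $C^\infty$ on $[0,1]\times\mathbb{R}^2$ (as $1+s(1-y)\geq 1$ for $y\in[0,1]$), and the explicit formulas show that the solution is defined globally. The main obstacle is identifying the right change of variables $(\phi,\psi)=(u/y,\,v/(1-y))$: once it is in place, the cancellation of the inhomogeneous terms against the $y'/y$ and $y'/(1-y)$ corrections reduces everything to a first-order scalar ODE for $\phi-\psi$ and a single quadrature.
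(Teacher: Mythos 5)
Your proposal is correct and follows essentially the same route as the paper: solve the autonomous $y$-equation by separation of variables, pass to $\phi=u/y$ and $\psi=v/(1-y)$ so that $\phi-\psi$ decays like $e^{-t/2}$, and reduce the rest to a quadrature in the variable $y$. The only (harmless) difference is at the last step, where you integrate $\psi'=\tfrac{y}{2}e^{-t/2}$ directly instead of solving the linear ODE for $\beta=\psi e^{t/2}$ by variation of parameters as the paper does; both yield the same integral.
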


\begin{proof} Let us first focus on the first differential equation of \eqref{eq:dyn-system} which happens to be autonomous and easily solvable: \begin{equation}\label{eq-dydt}\frac{dy_t}{dt}=\frac{sy_t(1-y_t)}{y_t+(1+s)(1-y_t)}=\frac{sy_t(1-y_t)}{1+s(1-y_t)}=:m(y_t),\end{equation} with $y_0=a$. Since the function $m$ is Lipschitz continuous on $[0,1]$, from Picard-Lindel\"{o}f (or Cauchy-Lipschitz) Theorem,
 this ordinary differental equation admits a unique solution starting at $a\in[0,1]$. Since $m(0)=m(1)=0$, this solution is simply constant if $a\in\{0,1\}$, and if $a\in(0,1)$, then this solution is such that $y_t\in(0,1)$ for all $t$, and the differential equation \eqref{eq-dydt} can then also be written as:
$$\frac{dy_t}{dt}\left[\frac{1+s}{y_t}+\frac{1}{1-y_t}\right]=s.$$ Noting that $\frac{1+s}{y}$ and $\frac{1}{1-y}$ are respectively the derivative of $y\rightarrow(1+s)\ln(y)$ and $y\rightarrow-\ln(1-y)$ gives that the solution to this equation starting from $a$ satisfies 
\begin{equation}\label{eq-y^1+s}\frac{y_t^{1+s}}{1-y_t}=\frac{a^{1+s}}{1-a}\exp(st)\in[0,\infty) \quad\text{for all $t\geq0$}.\end{equation} Let $F$ be the function such that $F(x)=\frac{x^{1+s}}{1-x}$ for all $x\in[0,1)$. The function $F$ is strictly increasing on $[0,1)$ and sends $[0,1)$ onto $[0,+\infty)$. Therefore, since $a\in[0,1)$, Equation \eqref{eq-y^1+s} gives that
$y_t= F^{-1}\left(\frac{a^{1+s}}{1-a}\exp(st)\right)$ for all $t\geq0$, which is the first equation of \eqref{eq-sol-syst}. 

\medskip
Note that it gives in particular that 
\begin{align}\label{eq-yt}
\exp(t/2)=\exp(st)^{\frac{1}{2s}}=\left(F(y_t)\frac{1-a}{a^{1+s}}\right)^{\frac{1}{2s}}=\frac{y_t^{\frac{1+s}{2s}}}{(1-y_t)^{\frac{1}{2s}}}\frac{(1-a)^{\frac{1}{2s}}}{a^{\frac{1+s}{2s}}}.
\end{align}
This formula will be used at the end of the proof.
Let us now come back to the whole differential equation \eqref{eq:dyn-system}. The existence and uniqueness of the solution starting from the initial condition $(a,a,0)$ falls again from Cauchy-Lipschitz theorem. This solution $(z_t)_{t\geq0}$ takes values in $[0,1]^3$. Now from Equation \eqref{eq:dyn-system},
\begin{align*}
    \left(\frac{u}{y}\right)'&=\frac{u'}{y}-u\frac{y'}{y^2}=\frac{u}{2y}+\frac{u+v}{2}-\frac{u}{y[y+(1+s)(1-y)]}-\frac{usy(1-y)}{y^2[y+(1+s)(1-y)]}\\&=\frac{u+v}{2}-\frac{u}{2y},
\end{align*}

and similarly,
\begin{equation}\label{eqcalculderivC}
\begin{aligned}
    \left(\frac{v}{1-y}\right)'&=\frac{v'}{1-y}+v\frac{y'}{(1-y)^2}\\&=\frac{v}{2(1-y)}+\frac{u+v}{2}-\frac{(1+s) v}{(1-y)[y+(1+s)(1-y)]}+\frac{vsy(1-y)}{(1-y)^2[y+(1+s)(1-y)]}\\&=\frac{u+v}{2}-\frac{v}{2(1-y)}.
\end{aligned}
\end{equation}

Hence
\begin{align*}
    \left(\frac{u}{y}-\frac{v}{1-y}\right)'&=-\frac{1}{2}\left(\frac{u}{y}-\frac{v}{1-y}\right),
\end{align*}
which gives that 
\begin{equation*}\frac{u_t}{y_t}-\frac{v_t}{1-y_t}=\exp(-t/2)\end{equation*} since $u_0=y_0$ and $v_0=0$.

Therefore considering the quantity $D_t=\frac{u_te^{t/2}}{y_t}-\frac{v_te^{t/2}}{1-y_t}$, one has that \begin{equation}\label{EquationD}\frac{dD_t}{dt}=0 \qquad \text{hence}\qquad D_t=D_0=1 \qquad\text{for all $t\geq0$.}\end{equation} Note that this property is a fundamental characteristic of our model. An analogous version of it was given in \cite{CoronLeJan22} (Equation (1.7)) in a discrete setting and for infinite selection, and can be extended to the finite selection case (see Proposition 2.6 of the unpublished note \cite{coronLeJan2024}). 

Equation \eqref{EquationD} gives that \begin{equation}\label{equationu}u_te^{t/2}=y_t\left(1+\frac{v_t}{1-y_t}e^{t/2}\right)\end{equation} for all $t\geq0$. Finally, let us set \begin{equation}\label{eqdefbeta}\beta_t=\frac{v_t}{1-y_t}e^{t/2}.\end{equation} We can express $\beta_t$ as a function of $y_t$. Indeed, since $\frac{d\beta_t}{dt}=\frac{u_t+v_t}{2}e^{t/2}$ from Equation \eqref{eqcalculderivC},

\begin{align*}\frac{d\beta_t}{dy_t}=\frac{\frac{d\beta_t}{dt}}{\frac{dy_t}{dt}}&=\frac{\frac{u_t+v_t}{2}e^{t/2}}{\frac{sy_t(1-y_t)}{y_t+(1+s)(1-y_t)}} \\&=\frac{y_t\left[1+\frac{v_t}{1-y_t}e^{t/2}\right]+v_te^{t/2}}{\frac{2sy_t(1-y_t)}{y_t+(1+s)(1-y_t)}}\quad\text{from Equation \eqref{equationu}} \\&=\frac{y_t+\beta_t}{\frac{2sy_t(1-y_t)}{y_t+(1+s)(1-y_t)}}.\end{align*}
So \begin{equation}\label{eq-diff-beta}\frac{d\beta_t}{dy_t}=\frac{\beta_t}{\frac{2sy_t(1-y_t)}{y_t+(1+s)(1-y_t)}}+\frac{y_t+(1+s)(1-y_t)}{2s(1-y_t)}\end{equation} 
and this linear autonomous equation can be solved by variation of parameters: let us introduce a function $C$ defined on $(0,1)$ such that

\begin{align*}\beta_t=C(y_t)\frac{y_t^{\frac{1+s}{2s}}}{(1-y_t)^{\frac{1}{2s}}}\quad\text{for all $t\geq0$}.\end{align*}
Then from Equation \eqref{eq-diff-beta},
\begin{align*}\frac{dC}{dy}\frac{y^{\frac{1+s}{2s}}}{(1-y)^{\frac{1}{2s}}}=\frac{y+(1+s)(1-y)}{2s(1-y)}=\frac{y}{2s(1-y)}+\frac{1+s}{2s},\end{align*} therefore
\begin{align*}\frac{dC}{dy}=\frac{(1-y)^{\frac{1}{2s}-1}}{2sy^{\frac{1+s}{2s}-1}}+\frac{1+s}{2s}\frac{(1-y)^{\frac{1}{2s}}}{y^{\frac{1+s}{2s}}}.\end{align*}

Hence since $\beta_0=0$ and $y_0=a$, 
\begin{align*}C(y)&=\frac{1}{2s}\int_a^{y}\frac{(1-x)^{\frac{1}{2s}}}{x^{\frac{1+s}{2s}}}\left[\frac{x}{1-x}+1+s\right]dx\\&=\frac{1}{2s}\int_a^{y}\frac{(1-x)^{\frac{1}{2s}}}{x^{\frac{1+s}{2s}}}\left[\frac{1}{1-x}+s\right]dx\\&=\int_a^{y}\frac{(1-x)^{\frac{1}{2s}}}{x^{\frac{1+s}{2s}}}\left[\frac{1}{2}+\frac{1}{2s}\frac{1}{1-x}\right]dx,\end{align*}

which gives that $$\beta_t=\frac{y_t^{\frac{1+s}{2s}}}{(1-y_t)^{\frac{1}{2s}}}\int_a^{y_t}\frac{(1-x)^{\frac{1}{2s}}}{x^{\frac{1+s}{2s}}}\left[\frac{1}{2}+\frac{1}{2s}\frac{1}{1-x}\right]dx.$$
Hence from Equation \eqref{eq-yt} and the definition of $\beta_t$ in \eqref{eqdefbeta}, \begin{align*}\frac{v_t}{1-y_t}&=e^{-\frac{t}{2}}\beta_t=\frac{a^{\frac{1+s}{2s}}}{(1-a)^{\frac{1}{2s}}}\int_a^{y_t}\frac{(1-x)^{\frac{1}{2s}}}{x^{\frac{1+s}{2s}}}\left[\frac{1}{2}+\frac{1}{2s}\frac{1}{1-x}\right]dx\end{align*} which gives the third equation of \eqref{eq-sol-syst}.

Finally from Equation \eqref{EquationD}, 
\begin{align*}\frac{u_t}{y_t}&=e^{-\frac{t}{2}}(1+\beta_t)=\frac{a^{\frac{1+s}{2s}}}{(1-a)^{\frac{1}{2s}}}\left[\frac{(1-y_t)^{\frac{1}{2s}}}{y_t^{\frac{1+s}{2s}}}+\int_a^{y_t}\frac{(1-x)^{\frac{1}{2s}}}{x^{\frac{1+s}{2s}}}\left[\frac{1}{2}+\frac{1}{2s}\frac{1}{1-x}\right]dx\right]\end{align*} which gives the second equation of \eqref{eq-sol-syst}.
\end{proof}

Recall that in the limiting solution $(z_t)_{t\geq0}=(y_t,u_t,v_t)_{t\geq0}$, $y_t$ is strictly increasing, from $y_0$ to $1$ (the function $y$ was used as a time change in the previous proof). For any $b\in[a,1)$, let us denote by $r_b$ the hitting time of $b$ by $(y_t)_{t\geq0}$. The limiting dynamical system $(z_t)_{t\geq0}$ satisfies the following:
\begin{corollary} \label{cor-zb}
\begin{description}
\item[(i)] For any $b\in[y_0,1]$, \begin{align*}z_{r_b}=\Big(b,b\frac{a^{\frac{1+s}{2s}}}{(1-a)^{\frac{1}{2s}}}&\left[\frac{(1-b)^{\frac{1}{2s}}}{b^{\frac{1+s}{2s}}}+\int_a^{b}\frac{(1-x)^{\frac{1}{2s}}}{x^{\frac{1+s}{2s}}}\left[\frac{1}{2}+\frac{1}{2s}\frac{1}{1-x}\right]dx\right],\\&(1-b)\frac{a^{\frac{1+s}{2s}}}{(1-a)^{\frac{1}{2s}}}\int_a^{b}\frac{(1-x)^{\frac{1}{2s}}}{x^{\frac{1+s}{2s}}}\left[\frac{1}{2}+\frac{1}{2s}\frac{1}{1-x}\right]dx\Big).\end{align*}
\item[(ii)] $(z_t)_{t\geq0}$, converges when $t$ goes to infinity, to $$z_{\infty}:=\left(1,\frac{a^{\frac{1+s}{2s}}}{(1-a)^{\frac{1}{2s}}}\left[\int_a^1\frac{(1-x)^{\frac{1}{2s}}}{x^{\frac{1+s}{2s}}}\left[\frac{1}{2}+\frac{1}{2s}\frac{1}{1-x}\right]dx\right],0\right).$$
\end{description}
\end{corollary}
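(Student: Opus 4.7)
The plan is to deduce Corollary \ref{cor-zb} directly from Proposition \ref{prop-z}, which already gives the explicit expressions for $(y_t, u_t, v_t)$. The key observation is that the formulas for $u_t$ and $v_t$ in \eqref{eq-sol-syst} are expressed entirely as functions of $y_t$ (with an additional dependence only on the initial condition $a$), so once we identify the value of $y_t$ at the relevant time, the other two coordinates follow immediately by substitution.

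For part (i), I would first note that $(y_t)_{t\geq 0}$ is strictly increasing on $[a,1)$: this is clear from the differential equation $y' = sy(1-y)/[y+(1+s)(1-y)]$, whose right-hand side is strictly positive on $(0,1)$. Moreover, from the first line of \eqref{eq-sol-syst}, $y_t = F^{-1}(\tfrac{a^{1+s}}{1-a}e^{st})$ with $F$ a bijection from $[0,1)$ onto $[0,\infty)$, so for any $b \in [a,1)$ the hitting time $r_b$ is finite and $y_{r_b} = b$. Substituting $y_t = b$ in the explicit formulas for $u_t$ and $v_t$ in \eqref{eq-sol-syst} yields exactly the three coordinates displayed in (i). The case $b=1$ will follow by taking limits, i.e., part (ii).

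For part (ii), I need to show $y_t \to 1$ and then pass to the limit in the integral expressions. Since $F(x) \to +\infty$ as $x \uparrow 1$ and $F^{-1}$ is continuous and increasing, $y_t = F^{-1}(\tfrac{a^{1+s}}{1-a}e^{st}) \to 1$ as $t \to \infty$. For $u_t$, the boundary term $(1-y_t)^{1/(2s)}/y_t^{(1+s)/(2s)}$ tends to $0$, and the integral $\int_a^{y_t} \to \int_a^1$, giving the announced limit for the middle coordinate, provided this improper integral converges. For $v_t$, the factor $(1-y_t)$ tends to $0$ and multiplies the same integral, so $v_t \to 0$ once the integral is shown to be finite.

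The only non-trivial verification is the integrability at $x=1$ of
\[
\frac{(1-x)^{1/(2s)}}{x^{(1+s)/(2s)}}\left[\frac{1}{2}+\frac{1}{2s(1-x)}\right].
\]
Near $x=1$ the dominant term behaves like $\frac{1}{2s}(1-x)^{1/(2s)-1}$, which is integrable since $1/(2s) > 0$; the other summand is bounded near $x=1$. Near $x=a$ there is no issue. This finishes both parts. The only potential obstacle is this integrability check at the endpoint $x=1$, but it is immediate from the explicit exponent, so no finer estimates are required.
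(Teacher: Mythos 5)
Your proposal is correct and follows essentially the same route as the paper: both deduce the corollary by direct substitution of $y_{r_b}=b$ into the explicit formulas of Proposition \ref{prop-z}, and obtain (ii) from the monotone convergence of $y_t$ to $1$. Your explicit check that the integrand behaves like $\frac{1}{2s}(1-x)^{\frac{1}{2s}-1}$ near $x=1$ and is therefore integrable is a detail the paper leaves implicit, but it is the right (and only) point needing verification.
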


\begin{proof}
 $(i)$ follows from Equation \eqref{eq-sol-syst}, with $t=r_b$. The convergence stated in $(ii)$ is immediate from Equation \eqref{eq-sol-syst} and notably from the (already mentioned) fact that $(y_t)_{t\geq0}$ is strictly increasing and converges to $1$ when $t$ goes to infinity. Note that the singularity in $1$ is integrable. 
\end{proof}

\subsection{Convergence}
We finally prove that if $Y_0=\lfloor aN\rfloor$, then for any constant $c>0$, the stochastic process $(Z_{\lfloor Nt\rfloor})_{0\leq t\leq c}$ converges, when $N$ goes to infinity, to the solution $(y_t,u_t,v_t)_{0\leq t \leq c}$ of the dynamical system \eqref{eq:dyn-system}, starting from $(a,a,0)$. Recall that we denoted by $(\mathcal{F}^Z_n,n\geq 0)$ the filtration associated to the stochastic process $(Z_n)_{n\geq 0}$.

\begin{proposition}\label{propCvce} Let $a\in(0,1)$. If $Y_0=\lfloor aN\rfloor$, then for any $c\in\mathbb{R}_+$, \begin{align}
    \sup_{0\leq t\leq c}\|Z_{\lfloor Nt\rfloor}-z_t\|\underset{N\rightarrow\infty}{\longrightarrow}0
\end{align} in probability.
\end{proposition}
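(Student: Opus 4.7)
My plan is a standard fluid-limit argument via martingale decomposition and Gronwall's lemma (in the spirit of Ethier--Kurtz). The key observation is that Proposition~\ref{prop-Z} identifies the conditional drift of $Z_n$ as exactly $\frac{1}{N}b(Z_n)$, where $b:[0,1]^3\to\mathbb R^3$ is precisely the vector field driving the ODE system \eqref{eq:dyn-system}. So I would set $\Delta M_{n+1}:=Z_{n+1}-Z_n-\frac{1}{N}b(Z_n)$; this defines a martingale difference sequence with respect to the filtration $(\mathcal{F}_n^Z)$. Since $\|Z_{n+1}-Z_n\|\le \sqrt{3}/N$ by Proposition~\ref{prop-Z} and $b$ is bounded on $[0,1]^3$, each $\|\Delta M_{n+1}\|$ is $O(1/N)$, and summation yields
$$Z_n=Z_0+\tfrac{1}{N}\sum_{k=0}^{n-1}b(Z_k)+M_n,\qquad M_n:=\sum_{k=1}^{n}\Delta M_k.$$

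I would then verify that $b$ is globally Lipschitz on $[0,1]^3$, with some constant $L$. This is immediate because the denominator $y+(1+s)(1-y)=1+s-sy$ stays $\ge 1$ for $y\in[0,1]$, so each coordinate of $b$ is a smooth rational function on the compact $[0,1]^3$. For the martingale term, Doob's $L^2$-inequality together with orthogonality of martingale increments would yield
$$\mathbb{E}\Big[\sup_{0\le n\le \lfloor cN\rfloor}\|M_n\|^2\Big]\le 4\sum_{k=0}^{\lfloor cN\rfloor-1}\mathbb{E}\|\Delta M_{k+1}\|^2 = O(1/N),$$
so $\sup_{n\le \lfloor cN\rfloor}\|M_n\|\to 0$ in probability.

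Finally I would introduce the piecewise-constant interpolation $\bar Z^N_t:=Z_{\lfloor Nt\rfloor}$ and rewrite the decomposition above as
$$\bar Z^N_t=Z_0+\int_0^t b(\bar Z^N_s)\,ds+R_N(t)+M_{\lfloor Nt\rfloor},$$
where $R_N(t)$ absorbs the $O(1/N)$ discrepancy between the Riemann sum $\tfrac{1}{N}\sum_{k=0}^{\lfloor Nt\rfloor-1}b(Z_k)$ and the integral, uniformly in $t\le c$ by boundedness of $b$. Subtracting the integral equation $z_t=z_0+\int_0^t b(z_s)\,ds$ satisfied by the ODE solution, noting that $\|Z_0-z_0\|\le \sqrt{3}/N$ (since $Y_0=\lfloor aN\rfloor$, $U_0=Y_0$, $V_0=0$ and $z_0=(a,a,0)$), and applying the Lipschitz bound gives
$$\|\bar Z^N_t-z_t\|\le \varepsilon_N+L\int_0^t\|\bar Z^N_s-z_s\|\,ds,$$
with $\varepsilon_N:=\|Z_0-z_0\|+\sup_{t\le c}\|R_N(t)\|+\sup_{n\le \lfloor cN\rfloor}\|M_n\|\to 0$ in probability. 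Gronwall's lemma then concludes $\sup_{0\le t\le c}\|\bar Z^N_t-z_t\|\le \varepsilon_N e^{Lc}\to 0$ in probability, which is the claim. I do not anticipate a serious obstacle: once the drift has been identified as Lipschitz on a compact domain (the denominator $1+s-sy$ never vanishes on $[0,1]$) and the martingale increments are seen to be of order $1/N$, the argument is textbook. The only mild subtlety is that $(Z_n)$ itself is not Markov, but this is harmless since the whole argument relies only on the conditional expectation computed in Equation~\eqref{eq-ExpDZ} relative to $(\mathcal{F}_n^Z)$.
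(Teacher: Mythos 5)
Your proposal is correct and follows essentially the same route as the paper's proof: the same martingale decomposition with drift $\tfrac{1}{N}g(Z_n)$ identified from Proposition \ref{prop-Z}, Doob's $L^2$-inequality with orthogonality of increments to kill the martingale part, Lipschitz continuity of the drift on $[0,1]^3$, and Gronwall's lemma. Your explicit handling of the Riemann-sum-versus-integral remainder $R_N(t)$ and of the initial-condition discrepancy is slightly more careful than the paper's write-up, but it is the same argument.
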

\begin{proof}
From Proposition \ref{prop-Z}, for any $n\in\mathbb{Z}_+$, we obtain that: $$Z_{n+1}-Z_n=A_{n+1}+\frac{1}{N}g(Z_n)$$ where $g(Z_n)=\mathbb{E}(Z_{n+1}-Z_n|\mathcal{F}^Z_n)$ is such that \begin{align*}
    g(y,u,v)=\Big(\frac{y(1-y)s}{y+(1+s)(1-y)},&\frac{u}{2}+\frac{u+v}{2}y-\frac{u}{y+(1+s)(1-y)},\\&\frac{v}{2}+\frac{u+v}{2}(1-y)-\frac{(1+s)v}{y+(1+s)(1-y)}\Big)\end{align*} for all $(y,u,v)\in[0,1]^3$, and $A_{n+1}=Z_{n+1}-Z_n-\mathbb{E}(Z_{n+1}-Z_n|\mathcal{F}^Z_n)=Z_{n+1}-\mathbb{E}(Z_{n+1}|\mathcal{F}^Z_n)$. Therefore
\begin{align}\label{DecompoZ}
    Z_n=Z_0+\sum_{k=1}^n A_k+\frac{1}{N}\sum_{k=0}^{n-1} g(Z_k)
\end{align}
and the variables $(A_k)_{1\leq k\leq n}$
are the increments of the $\mathcal{F}^Z$-martingale $(M_n)_{n\geq 0}$ such that $M_n=\sum_{k=1}^nA_k$ for any $n\geq 0$. 
Now from Doob's martingale inequality, for all $c>0$, 
$$\mathbb{E}\left(\sup_{0\leq n\leq \lfloor c N\rfloor} (M_n)^2\right)\leq  4\mathbb{E}\left(M_{\lfloor cN\rfloor}^2\right).$$ 
Besides, from the martingale property, $$\mathbb{E}(M_{\lfloor cN\rfloor}^2)=\mathbb{E}\left(\Big(\sum_{k=1}^{\lfloor cN\rfloor}A_k\Big)^2\right)=\mathbb{E}\left(\sum_{k=1}^{\lfloor cN\rfloor}(A_k)^2\right)\leq \frac{12c}{N}\quad\text{as $\|Z_{n+1}-Z_n\|^2\leq \frac{3}{N^2}$,}$$ which gives that 
$$\sup_{0\leq t\leq c}\left\|\sum_{k=1}^{\lfloor Nt\rfloor} A_k \right\|\longrightarrow 0 \quad\quad \text{in probability when $N$ goes to infinity.}$$ 
Now from Proposition \ref{prop-z},
\begin{align}
    z_t=(a,a,0)+\int_{0}^{t}g(z_s)ds
\end{align} for all $t\in[0,c]$.
Therefore by \eqref{DecompoZ},
\begin{align}
    Z_{\lfloor Nt\rfloor}-z_t=(Y_0/N,Y_0/N,0)-(a,a,0)+\sum_{k=1}^{\lfloor Nt\rfloor}A_k+\int_0^t g(Z_{\lfloor Ns\rfloor})-g(z_s)ds + R_N(t)
\end{align}
where $|R_N(t)|\leq \frac{\|g\|_{\infty}}{N}$.
Now since the Jacobian of the function $g$ is bounded, there exists $K\in\mathbb{R}_+$ such that $\|g(z)-g(z')\|\leq K\|z-z'\|$ for all $z,z'\in[0,1]^3$. Hence
\begin{align*}
    \left\|Z_{\lfloor Nt\rfloor}-z_t\right\|\leq\|(Y_0/N,Y_0/N,0)-(a,a,0)\|+\left\|\sum_{k=1}^{\lfloor Nt\rfloor}A_k\right\|+\int_0^t K\|Z_{\lfloor Ns\rfloor}-z_s)\|ds+\frac{\|g\|_{\infty}}{N}.
\end{align*} 
Therefore by Gronwall's inequality, for any $t\in[0,c]$,
\begin{align*}
    \|Z_{\lfloor Nt\rfloor}-z_t\|\leq\left(\|(Y_0/N,Y_0/N,0)-(a,a,0)\|+\left\|\sum_{k=1}^{\lfloor Nt\rfloor}A_k\right\|+\frac{\|g\|_{\infty}}{N}\right)e^{Kt}
\end{align*} which gives the result.
\end{proof}

Corollary \ref{cor_main} follows directly from Theorem \ref{thmCvceDeterministe}, Proposition \ref{prop-Y-TN} (ii) and Corollary \ref{cor-zb} (i).

\bibliographystyle{plain}
\bibliography{biblio}

\end{document}